\documentclass[10pt,reqno]{amsart}
\usepackage{latexsym,amsxtra,amscd,ifthen}
\usepackage{amsfonts}
\usepackage{verbatim}
\usepackage{amsmath}
\usepackage{amsthm}
\usepackage{url}
\usepackage{multirow}
\usepackage{array}
\usepackage{setspace}
\usepackage[numbers, sort & compress]{natbib}
\usepackage{amssymb}

\usepackage{etex}
\usepackage{subfigure}
\usepackage{xmpmulti}
\usepackage{colortbl,dcolumn}
\usepackage[all,cmtip,line]{xy}

\usepackage{tikz}

\usetikzlibrary{calc}
\usetikzlibrary{through}

\setlength{\parindent}{0pt}
\addtolength{\parskip}{7pt}

\newtheorem{proposition}{Proposition}[section]
\newtheorem{lemma}[proposition]{Lemma}

\newtheorem{remark}[proposition]{Remark}

\theoremstyle{definition}
\newtheorem{definition}[proposition]{Definition}

\newcommand{\selabel}[1]{\label{se:#1}}

\def\<{\leqslant}
\def\>{\geqslant}
\def\a{\alpha}
\def\b{\beta}
\def\d{\delta}
\def\g{\gamma}

\def\o{\omega}

\def\e{\varepsilon}

\def\L{\Lambda}
\def\s{\sigma}

\def\ot{\otimes}
\def\ra{\rightarrow}

\date{}

\begin{document}
\title{Notes on  semisimple tensor categories of rank two}
\author{Hua Sun}
\address{College of Mathematical Science, Yangzhou University,
Yangzhou 225002, China}
\email{huasun@yzu.edu.cn}
\author{Hui-Xiang Chen}
\address{College of Mathematical Science, Yangzhou University,
Yangzhou 225002, China}
\email{hxchen@yzu.edu.cn}
\author{Yinhuo Zhang}
\address{Department of Mathematics $\&$  Statistics, University of Hasselt, Universitaire Campus, 3590 Diepeenbeek,Belgium}
\email{yinhuo.zhang@uhasselt.be}
\subjclass[2010]{18D10, 16T05}
\keywords{Green ring, Auslander algebra, associator, tensor category}

\begin{abstract}
In this paper,  we show that there are infinitely many semisimple tensor (or monoidal) categories of rank two over an algebraically closed field $\mathbb F$.
\end{abstract}

\maketitle
\section*{\bf Introduction}\selabel{1}


In recent years, great breakthroughs have been made in the classification of fusion categories. Ostrik classified the fusion categories of rank 2 \cite{Ostrik03-2}. He proved that there exist only 4 fusion categories of rank 2 up to tensor equivalence. Furthermore, he classified the pivotal fusion categories of rank 3 \cite{Ostrik03-3}. Larson investigated the Pseudo-unitary non-self-dual fusion categories of rank 4 \cite{HKL}. Rowell, Strong and Wang proved that there exist 35 unitary modular categories of rank less than 4 up to ribbon tensor equivalence \cite{ERZ}. Recently, Chen and Zhang showed that a finite rank tensor category $\mathcal{C}$ is uniquely determined  by  three structure invariants \cite{ChenZhang}: the Green ring $r(\mathcal{C})$, the Auslander algebra $A(\mathcal{C})$ and the associator constraints. In particular, if $\mathcal{C}$ is  semisimple tensor category of finite rank, then $\mathcal{C}$ is uniquely determined by the Green ring $r(\mathcal{C})$ and the associator constraints. We ask ourselves the question:  given a $\mathbb{Z}_+$-ring $R$,  is there a semisimple tensor category such that its Green ring is isomorphic to $R$? Such a tensor category is called a categorification of the $\mathbb{Z}_+$-ring. If it exists, then how many tensor categories are there such that their Green rings are isomorphic to $R$?

In this paper, we consider the categorization of the $\mathbb{Z}_+$-rings $R$ with a $\mathbb{Z}_+$-basis $\{r_1=1,r_2\}$ such that $r^2_2\neq 0$.   Let
$r_2^2=mr_1+nr_2$ for some nonnegative integers $m, n$ with $m+n>0$. We divide it into three cases: $m>0$, $n=0$; $m=0$, $n>0$; $m, n>0$.  We show that there is no semisimple tensor category $\mathcal{C}$ such that the Green ring $r(\mathcal{C})$ is isomorphic to $R$ with $m>1$ and $n=0$. If $m=1$ and $n=0$, then there is only two semisimple tensor categories $\mathcal{C}_+$ and $\mathcal{C}_-$ (up to tensor equivalent) when char$(\mathbb{F})\neq 2$. Moreover,  $\mathcal{C}_+ = \mathcal{C}_-$ if char$(\mathbb{F})= 2$. If $m=0$ and $n=1$, then there is only one semisimple tensor category $\mathcal{C}_1$. If $m=0$ and $n>1$, then there are at least two semisimple tensor categories $\mathcal{C}_n$ and $\mathcal{C}'_n$ for any $n$. If $m=n=1$, then there is only two semisimple tensor categories $\mathcal{C}_{(0)}$ and $\mathcal{C}'_{(0)}$ when char$(\mathbb{F})\neq 5$, and there is only one semisimple tensor category $\mathcal{C}_{(5)}$ when char$(\mathbb{F})= 5$. Finally, we proved that if $m>1$ and $2m>n^2$, then there is no semisimple tensor category $\mathcal{C}$ such that $r(\mathcal{C})$ is isomorphic to $R$.

The paper is organized as follows. In Section 1, we introduce some basic notations including a ${\bf (m,s)}$-type matrix and a $\mathbb{Z}_+$-ring. In Section 2, we characterize the structure invariants of the semisimple tensor category of rank two using $6$-$j$ symbols. We prove that the pentagon commutative diagram of a tensor category is equivalent to the Biedenharn-Elliot identity.  In Section 3, we consider the categorizations of $R$ with a $\mathbb{Z}_+$-basis $\{r_1=1,r_2\}$ such that $r^2_2\neq 0$.

\section{\bf Preliminaries and Notations}\selabel{2}

Throughout, let $\mathbb{F}$ be an algebraically closed field, $\mathbb Z$ the ring of  integers, and  $\mathbb{N}$  the set of nonnegative integers. Unless otherwise stated, all algebras are defined over $\mathbb F$, ${\rm dim}$ means ${\rm dim}_{\mathbb F}$.
All rings and algebras are assumed to be associative with identity. The definitions of a $\mathbb{Z}_+$-basis and a $\mathbb{Z}_+$-ring
can be found in \cite{EK, L1, Ostrik03-1}. For the theory of  (tensor) categories, we refer the reader to \cite{BaKir, EGNO, Ka}.

\begin{definition}\label{+ring}
(1)  Let $R$ be a ring free as a module over $\mathbb Z$.  A $\mathbb{Z}_+$-basis of  $R$
is a $\mathbb Z$-basis $B=\{b_i\}_{i\in I}$ such that for any $i, j\in I$,
$b_ib_j=\sum_lc_{ijl}b_l$ with $c_{ijl}\in{\mathbb N}$.\\
(2) A $\mathbb{Z}_+$-ring is a $\mathbb Z$-algebra with identity $1$ endowed with a
fixed $\mathbb{Z}_+$-basis.\\
(3) A $\mathbb{Z}_+$-ring with a $\mathbb{Z}_+$-basis $B$ is a unital
$\mathbb{Z}_+$-ring if $1\in B$.
\end{definition}

For an $\mathbb{F}$-algebra $A$ and two positive integers $m$ and $n$, we denote by $M_{m\times n}(A)$ the $\mathbb F$-space
consisting of all $m\times n$-matrices over $A$, and by $M_m(A):=M_{m\times m}(A)$ the full matrix algebra of $m\times m$-matrices over $A$.
If $B$ is a subspace of $A$, let $M_{m\times n}(B)$ denote the subspace of $M_{m\times n}(A)$ consisting of all
$m\times n$-matrices over $A$ with entries contained in $B$.

In the following, assume that $A$ is the $\mathbb{F}$-algebra $\mathbb{F}^2$ with a given set of orthogonal primitive idempotents
$\{e_1, e_2\}$ such that $e_1+e_2=1$. In this case, $Ae_i=\mathbb{F}e_i$, $i=1,2$

For any $\mathbf{m}=(m_1, m_2)\in {\mathbb N}^2$, let $|\mathbf{m}|:=m_1+m_2$.
For $\mathbf{m}=(m_1,m_2), \mathbf{s}=(s_1,s_2)\in{\mathbb N}^2$ with $|\mathbf m|>0$
and $|\mathbf s|>0$, an $(\mathbf{m}, \mathbf{s})$-type matrix over $A$
is a matrix $X\in M_{|\mathbf{m}|\times|\mathbf{s}|}(A)$ defined by
$X=\left(\begin{array}{cc}
X_1 & 0 \\
0 & X_2 \\
\end{array}\right)$
with $X_{i}\in M_{m_i\times s_i}(\mathbb{F}e_i)$, $i=1,2$.
A $(\mathbf{0}, \mathbf{m})$-type matrix over $A$ means a $1\times|\mathbf{m}|$ zero matrix over $A$,
and an $(\mathbf{m}, \mathbf{0})$-type matrix over $A$ means an $|\mathbf{m}|\times 1$ zero matrix over $A$.
Of course,  a  $(\mathbf{0}, \mathbf{0})$-type matrix over $A$ means a $1\times 1$ zero matrix over $A$.
Here $\mathbf{0}=(0,0)$.

For any positive integer $m$, let $I_m$ denote the $m\times m$ identity matrix
over $A$ (or over $\mathbb F$). Then the $m\times m$ identity matrix over $\mathbb{F}e_i$
is $e_iI_m$ (or $I_me_i$), where $i=1,2$.

For $\mathbf{m}, \mathbf{s}\in{\mathbb N}^2$ with $\mathbf m\neq\mathbf 0$
and $\mathbf s\neq\mathbf 0$, let $M_{\mathbf{m}\times\mathbf{s}}(A)$ be the
$\mathbb{F}$-space consisting of all $(\mathbf{m}, \mathbf{s})$-type matrices over $A$.
Then $M_{\mathbf{m}}(A):=M_{\mathbf{m}\times\mathbf{m}}(A)$ is an associative
$\mathbb{F}$-algebra with the identity $E_{\mathbf m}$ given by
$E_{\mathbf m}=\left(\begin{array}{cc}
e_1I_{m_1} & 0 \\
0 & e_2I_{m_2} \\
\end{array}\right)$,
here we regard $E_{\mathbf m}=e_2I_{m_2}$ if $m_1=0$ and $E_{\mathbf m}=e_1I_{m_1}$ if $m_2=0$.
For convenience, we let $E_{\mathbf 0}:=0$, the $1\times 1$ zero matrix over $A$.
Then $M_{\mathbf{0}}(A)=E_{\mathbf 0}=0$.

Note that $\mathbb{N}^2$ is a monoid with respect to the addition:
$(m_1, m_2)+(s_1, s_2)=(m_1+s_1, m_2+s_2)$.

Let $\mathbf{m}$, $\mathbf{m}'$, $\mathbf{s}\in\mathbb{N}^2$.
For any $X=\left(\begin{array}{cc}
X_1&0\\
0&X_2\\
\end{array}\right)\in M_{\mathbf{m}\times\mathbf{s}}(A)$ with $X_{i}\in M_{m_i\times s_i}(\mathbb{F}e_i)$
and
$Y=\left(\begin{array}{cc}
Y_1&0\\
0&Y_2\\
\end{array}\right)\in M_{\mathbf{m}'\times\mathbf{s}}(A)$ with $Y_{i}\in M_{m'_i\times s_i}(\mathbb{F}e_i)$,
define $X\underline{\oplus}Y\in M_{(\mathbf{m}+\mathbf{m}')\times\mathbf{s}}(A)$ by
$$X\underline{\oplus}Y:=\left(\begin{array}{cc}
X_1&0\\
Y_1&0\\
0&X_2\\
0&Y_2\\
\end{array}\right).$$
It is obvious that $\underline{\oplus}$ is associative, but not commutative in general.

Let $P_{\mathbf{m},\mathbf{m}'}$ be an $(|\mathbf{m}+\mathbf{m}'|)\times(|\mathbf{m}+\mathbf{m}'|)$
permutation matrix defined by
$$P_{\mathbf{m},\mathbf{m}'}=\left(\begin{array}{cccc}
I_{m_1}&0&0&0\\
0&0&I_{m'_1}&0\\
0&I_{m_2}&0&0\\
0&0&0&I_{m'_2}\\
\end{array}\right).$$
It is obvious that $X\underline{\oplus}Y=P_{\mathbf{m},\mathbf{m}'}\left(\begin{array}{c}
X\\
Y\\
\end{array}\right)$.
In a similar way, we define another sum $X\overline{\oplus}Y\in M_{\mathbf{m}\times(\mathbf{s}+\mathbf{s}')}(A)$ for two matrices
$X\in M_{\mathbf{m}\times\mathbf{s}}(A)$ and  $ Y\in M_{\mathbf{m}\times\mathbf{s}'}(A)$ as follows:
$$X\overline{\oplus}Y:=\left(\begin{array}{cccc}
X_1&Y_1&0&0\\
0&0&X_2&Y_2\\
\end{array}\right).$$
Like the sum $\underline{\oplus}$,  the sum $\overline{\oplus}$ is associative, but not commutative in general .
Obviously, $X\overline{\oplus}Y=(X, Y)P_{\mathbf{s}, \mathbf{s}'}^T$, where $P^T$ denotes the transposed
matrix of $P$.

Let $\mathbf{m}_1, \mathbf{m}_2, \cdots, \mathbf{m}_r$, $\mathbf{s}_1, \mathbf{s}_2, \cdots, \mathbf{s}_l\in\mathbb{N}^2$.
For a matrix
$$X=\left(\begin{array}{cccc}
X_{11}&X_{12}&\cdots&X_{1l}\\
X_{21}&X_{22}&\cdots&X_{2l}\\
\cdots&\cdots&\cdots&\cdots\\
X_{r1}&X_{r2}&\cdots&X_{rl}\\
\end{array}\right)$$
over $A$ with $X_{ij}\in M_{\mathbf{m}_i\times\mathbf{s}_j}(A)$, $1\leqslant i\leqslant r$,
$1\leqslant j\leqslant l$, we define an $(\mathbf{m}, \mathbf{s})$-type matrix $\Pi(X)$ over $A$ by setting
$$\Pi(X):=(X_{11}\overline{\oplus}X_{12}\overline{\oplus}\cdots\overline{\oplus}X_{1l})
\underline{\oplus}(X_{21}\overline{\oplus}X_{22}\overline{\oplus}\cdots\overline{\oplus}X_{2l})
\underline{\oplus}\cdots
\underline{\oplus}(X_{r1}\overline{\oplus}X_{r2}\overline{\oplus}\cdots\overline{\oplus}X_{rl}),$$
where $\mathbf{m}=\mathbf{m}_1+\mathbf{m}_2+\cdots+\mathbf{m}_r$ and
$\mathbf{s}=\mathbf{s}_1+\mathbf{s}_2+\cdots+\mathbf{s}_l$. Obviously, we have
$$\Pi(X)=(X_{11}\underline{\oplus}X_{21}\underline{\oplus}\cdots\underline{\oplus}X_{r1})
\overline{\oplus}(X_{12}\underline{\oplus}X_{22}\underline{\oplus}\cdots\underline{\oplus}X_{r2})
\overline{\oplus}\cdots
\overline{\oplus}(X_{1l}\underline{\oplus}X_{2l}\underline{\oplus}\cdots\underline{\oplus}X_{rl}).$$

For $\mathbf{m}_1, \mathbf{m}_2, \cdots, \mathbf{m}_r\in\mathbb{N}^2$, define a permutation matrix
$P_{\mathbf{m}_1,\cdots,\mathbf{m}_r}$ recursively on $r$ as follows:
for $r=1$, $P_{\mathbf{m}_1}=I_{|\mathbf{m}_1|}$; for $r=2$,
$P_{\mathbf{m}_1, \mathbf{m}_2}$ is defined as before; for $r>2$,
$P_{\mathbf{m}_1,\cdots,\mathbf{m}_r}:=P_{\mathbf{m}_1+\cdots+\mathbf{m}_{r-1}, \mathbf{m}_r}
\left(\begin{array}{cc}
P_{\mathbf{m}_1,\cdots,\mathbf{m}_{r-1}}&0\\
0&I_{|\mathbf{m}_r|}\\
\end{array}\right)$.
Then one can see that
$$\Pi(X)=P_{\mathbf{m}_1,\cdots,\mathbf{m}_r}XP_{\mathbf{s}_1,\cdots,\mathbf{s}_l}^T.$$

Let $A$ and $B$ be two algebras over $\mathbb F$. Let $X=(x_{ij})\in M_{m\times s}(A)$ and
$Y=(y_{ij})\in M_{m'\times s'}(B)$. Then one can define a matrix
$X\ot_{\mathbb F}Y\in M_{mm'\times ss'}(A\ot_{\mathbb F}B)$ in a natural way:

$$X\ot_{\mathbb F}Y:=\left(\begin{array}{cccc}
X\ot_{\mathbb F}y_{11}&X\ot_{\mathbb F}y_{12}&\cdots&X\ot_{\mathbb F}y_{1s'}\\
X\ot_{\mathbb F}y_{21}&X\ot_{\mathbb F}y_{22}&\cdots&X\ot_{\mathbb F}y_{2s'}\\
\cdots&\cdots&\cdots&\cdots\\
X\ot_{\mathbb F}y_{m'1}&X\ot_{\mathbb F}y_{m'2}&\cdots&X\ot_{\mathbb F}y_{m's'}\\
\end{array}\right),$$
where
$$X\ot_{\mathbb F}y_{ij}:=\left(\begin{array}{cccc}
x_{11}\ot_{\mathbb F}y_{ij}&x_{12}\ot_{\mathbb F}y_{ij}&\cdots&x_{1s}\ot_{\mathbb F}y_{ij}\\
x_{21}\ot_{\mathbb F}y_{ij}&x_{22}\ot_{\mathbb F}y_{ij}&\cdots&x_{2s}\ot_{\mathbb F}y_{ij}\\
\cdots&\cdots&\cdots&\cdots\\
x_{m1}\ot_{\mathbb F}y_{ij}&x_{m2}\ot_{\mathbb F}y_{ij}&\cdots&x_{ms}\ot_{\mathbb F}y_{ij}\\
\end{array}\right)$$
for any $1\<i\<m'$ and $1\<j\<s'$.
It is obvious that the above tensor product is associative.
If $A=B=\mathbb F$, then we may regard $\mathbb{F}\ot_{\mathbb{F}}\mathbb{F}=\mathbb{F}$ and
$x_{ij}\ot_{\mathbb{F}}y_{i'j'}=x_{ij}y_{i'j'}$, and so $X\ot_{\mathbb{F}}Y\in M_{mm'\times ss'}(\mathbb{F})$.
The following property of the tensor product is easy to check.

\begin{lemma}\label{composition}
Assume that $A$ and $B$ are two $\mathbb F$-algebras. Let $X\in M_{m\times s}(A)$,
$X_1\in M_{s\times t}(A)$, $Y\in M_{m'\times s'}(B)$ and $Y_1\in M_{s'\times t'}(B)$.
Then $(X\ot_{\mathbb F}Y)(X_1\ot_{\mathbb F}Y_1)=(XX_1)\ot_{\mathbb F}(YY_1)$.
\end{lemma}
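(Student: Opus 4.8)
The plan is to prove the identity by a direct entrywise comparison, once the rows and columns of a matrix tensor product have been labelled by suitable pairs of indices. Reading off the block description of $X\ot_{\mathbb F}Y$ given above, I would label its rows by pairs $(i',i)$ with $1\leqslant i'\leqslant m'$ and $1\leqslant i\leqslant m$ in lexicographic order ($i'$ the major index), and its columns by pairs $(j',j)$ with $1\leqslant j'\leqslant s'$ and $1\leqslant j\leqslant s$; then the $\big((i',i),(j',j)\big)$-entry of $X\ot_{\mathbb F}Y$ is the elementary tensor $x_{ij}\ot_{\mathbb F}y_{i'j'}$. In the same fashion the $\big((j',j),(k',k)\big)$-entry of $X_1\ot_{\mathbb F}Y_1$ is $(x_1)_{jk}\ot_{\mathbb F}(y_1)_{j'k'}$, where $1\leqslant k'\leqslant t'$ and $1\leqslant k\leqslant t$, and the $\big((i',i),(k',k)\big)$-entry of $(XX_1)\ot_{\mathbb F}(YY_1)$ is $(XX_1)_{ik}\ot_{\mathbb F}(YY_1)_{i'k'}$. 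Note all three matrices live in $M_{mm'\times tt'}(A\ot_{\mathbb F}B)$ (resp. the intermediate sizes), so the product on the left-hand side is well defined.

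With this bookkeeping in place, I would simply compute the $\big((i',i),(k',k)\big)$-entry of the left-hand side: summing over the shared index $(j',j)$ and applying the multiplication rule $(a\ot_{\mathbb F}b)(c\ot_{\mathbb F}d)=(ac)\ot_{\mathbb F}(bd)$ of the algebra $A\ot_{\mathbb F}B$ together with the $\mathbb F$-bilinearity of $\ot_{\mathbb F}$, one obtains
\begin{align*}
\sum_{j'=1}^{s'}\sum_{j=1}^{s}\big(x_{ij}\ot_{\mathbb F}y_{i'j'}\big)\big((x_1)_{jk}\ot_{\mathbb F}(y_1)_{j'k'}\big)
&=\Big(\sum_{j=1}^{s}x_{ij}(x_1)_{jk}\Big)\ot_{\mathbb F}\Big(\sum_{j'=1}^{s'}y_{i'j'}(y_1)_{j'k'}\Big)\\
&=(XX_1)_{ik}\ot_{\mathbb F}(YY_1)_{i'k'}.
\end{align*}
Since this matches the corresponding entry of $(XX_1)\ot_{\mathbb F}(YY_1)$ and the indices $(i',i),(k',k)$ were arbitrary, the two matrices coincide.

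I do not anticipate any genuine difficulty here; the statement is essentially the mixed-product rule for Kronecker products, and the only point that needs care is keeping the three index conventions (for $X\ot_{\mathbb F}Y$, for $X_1\ot_{\mathbb F}Y_1$, and for $(XX_1)\ot_{\mathbb F}(YY_1)$) consistent, so that the block in position $(i',j')$ of $X\ot_{\mathbb F}Y$ is indeed $X\ot_{\mathbb F}y_{i'j'}$ and the interchange of the two finite summations is legitimate. If one prefers to avoid indices, an alternative is to first settle the special case in which $Y$ and $Y_1$ are $1\times1$ matrices, i.e. scalars $y,y_1\in B$ — there $(X\ot_{\mathbb F}y)(X_1\ot_{\mathbb F}y_1)=(XX_1)\ot_{\mathbb F}(yy_1)$ follows at once from the same multiplication rule — and then to reassemble the general identity block by block using the associativity of $\ot_{\mathbb F}$; but the entrywise argument above is shorter and more transparent.
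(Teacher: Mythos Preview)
Your proof is correct and is exactly the routine verification the paper has in mind: the paper does not spell out a proof but simply remarks that the property ``is easy to check,'' and your entrywise computation (with the block indexing matching the paper's definition of $X\ot_{\mathbb F}Y$) is precisely that check.
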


Let  $n$ be a positive integer. For any $1\<i\<n$, let $\e_{n,i}=(\d_{i1}\ \d_{i2}\ \cdots\ \d_{in})\in M_{1\times n}(\mathbb F)$, where $\d_{ij}=1$ for $j=i$ and $\d_{ij}=0$ for $j\neq i$.
For any $1\<i,j\<n$,
let $I^{[n]}_{i,j}=\e^T_{n,i}\e_{n,j}\in M_n(\mathbb F)$.
Let
$$I_{[n]}=\sum_{i,j=1}^nI_{i,j}^{[n]}\ot_{\mathbb F}I_{j,i}^{[n]}
=\left(\begin{array}{cccc}
I_{1,1}^{[n]}&I_{2,1}^{[n]}&\cdots&I_{n,1}^{[n]}\\
I_{1,2}^{[n]}&I_{2,2}^{[n]}&\cdots&I_{n,2}^{[n]}\\
\cdots&\cdots&\cdots&\cdots\\
I_{1,n}^{[n]}&I_{2,n}^{[n]}&\cdots&I_{n,n}^{[n]}\\
\end{array}\right)\in M_{n^2}(\mathbb F).$$
If there is no ambiguity, $I^{[n]}_{i,j}$ is simply written as $I_{i,j}$.
For any positive integers  $m$ and $n$, the zero matrix 0 in $M_{m\times n}(\mathbb F)$
(or in $M_{n}(\mathbb F)$) is sometimes denoted  $0_{m\times n}$ (or $0_n$)
in order to indicate its order.

\section{\bf Construction of semisimple tensor categories of rank two}\selabel{2}

Throughout this section, assume that $R$ is a $\mathbb{Z}_+$-ring with a unital
$\mathbb{Z}_+$-basis $\{r_1=1, r_2\}_{i\in I}$ such that $r_2^2\neq 0$,
and $A=\mathbb{F}^2$ is a 2-dimensional $\mathbb{F}$-algebra as given in the last section.
Let $\mathbb{F}^{\times}$ be the multiplicative group of nonzero elements of $\mathbb F$.

Now we construct a category $\mathcal C$ as follows:\\
(1) Ob$(\mathcal C):={\mathbb N}^2$;\\
(2) ${\rm Mor}(\mathcal C)$: for ${\mathbf m}=(m_1, m_2)$
and ${\mathbf s}=(s_1, s_2)$ in Ob$(\mathcal C)={\mathbb N}^2$, define
$${\rm Hom}_{\mathcal C}({\mathbf m}, {\mathbf s}):=M_{{\mathbf s}\times{\mathbf m}}(A);$$
(3) Composition: for ${\mathbf s}$, ${\mathbf m}$, ${\mathbf t}\in{\mathbb N}^2$,
define the composition
$${\rm Hom}_{\mathcal C}({\mathbf m}, {\mathbf s})\times {\rm Hom}_{\mathcal C}({\mathbf t}, {\mathbf m})
\ra {\rm Hom}_{\mathcal C}({\mathbf t}, {\mathbf s}),\
(X, Y)\mapsto X\circ Y$$
by $X\circ Y=XY\in M_{{\mathbf s}\times{\mathbf t}}(A)={\rm Hom}_{\mathcal C}({\mathbf t}, {\mathbf s})$, the usual product of matrices.

Let ${\mathbf{e}}_1=(1, 0)$, ${\mathbf{e}}_2=(0, 1)\in{\rm Ob}({\mathcal C})$.
Then by the definition of ${\mathcal C}$,
${\rm Hom}_{\mathcal C}({\mathbf{e}}_1, {\mathbf{e}}_2)=0$, ${\rm Hom}_{\mathcal C}({\mathbf{e}}_2, {\mathbf{e}}_1)=0$,
and ${\rm End}_{\mathcal C}({\mathbf{e}}_i)=e_iAe_i=\mathbb{F}e_i$, $i=1,2$.
By \cite[Proposition 2.14 and Corollary 2.15]{ChenZhang}, we have the following proposition.

\begin{proposition}\label{2.1}
$\mathcal C$ is a semisimple category over $\mathbb F$.
Moreover, ${\mathbf{e}}_1$, ${\mathbf{e}}_2$ are the only two non-isomorphic simple objects of $\mathcal C$.
\end{proposition}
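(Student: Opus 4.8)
The plan is to verify directly that the category $\mathcal{C}$ satisfies the definition of a semisimple $\mathbb{F}$-linear category, and then to identify its simple objects. First I would check that $\mathcal{C}$ is $\mathbb{F}$-linear: each $\mathrm{Hom}$-space $\mathrm{Hom}_{\mathcal C}(\mathbf m,\mathbf s)=M_{\mathbf s\times\mathbf m}(A)$ is an $\mathbb{F}$-vector space, composition is $\mathbb{F}$-bilinear (it is matrix multiplication), and the identity morphism on $\mathbf m$ is $E_{\mathbf m}$, which acts as a two-sided unit for matrix multiplication on $(\mathbf m,\mathbf s)$- and $(\mathbf t,\mathbf m)$-type matrices. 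I would also note that $\mathcal C$ admits finite direct sums: the object $\mathbf m+\mathbf m'$ serves as $\mathbf m\oplus\mathbf m'$, with the structure maps given by the obvious block inclusions and projections built from $\underline\oplus$ and $\overline\oplus$, and that the zero object is $\mathbf 0$.

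Next I would invoke \cite[Proposition 2.14 and Corollary 2.15]{ChenZhang}, as the excerpt indicates: the point is that $A=\mathbb{F}^2$ is a finite-dimensional semisimple algebra, and the category of finitely generated projective (equivalently, all finite-dimensional) modules over such an algebra is semisimple; the category $\mathcal C$ as constructed is precisely (equivalent to) this module category presented via matrices, with $\mathbf m=(m_1,m_2)$ encoding the module $(\mathbb{F}e_1)^{m_1}\oplus(\mathbb{F}e_2)^{m_2}$. Under this identification $\mathbf e_1$ and $\mathbf e_2$ correspond to the two simple $A$-modules $\mathbb{F}e_1$ and $\mathbb{F}e_2$. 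So the semisimplicity of $\mathcal C$ and the description of its simples follow from the cited results once the identification is set up.

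To pin down the simple objects intrinsically, I would compute $\mathrm{End}_{\mathcal C}(\mathbf e_i)=M_{\mathbf e_i\times\mathbf e_i}(A)=e_iAe_i=\mathbb{F}e_i\cong\mathbb{F}$, a division algebra, so each $\mathbf e_i$ is simple; and $\mathrm{Hom}_{\mathcal C}(\mathbf e_1,\mathbf e_2)=M_{\mathbf e_2\times\mathbf e_1}(A)=0=\mathrm{Hom}_{\mathcal C}(\mathbf e_2,\mathbf e_1)$, so $\mathbf e_1\not\cong\mathbf e_2$. Finally, for an arbitrary object $\mathbf m=(m_1,m_2)$, the block-diagonal shape of an $(\mathbf m,\mathbf m)$-type matrix gives $\mathrm{End}_{\mathcal C}(\mathbf m)\cong M_{m_1}(\mathbb{F})\times M_{m_2}(\mathbb{F})$, and the matrix units in each block yield a decomposition $\mathbf m\cong\mathbf e_1^{\,m_1}\oplus\mathbf e_2^{\,m_2}$ as a direct sum of copies of $\mathbf e_1$ and $\mathbf e_2$; hence every object is semisimple with $\mathbf e_1,\mathbf e_2$ the only simple constituents.

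The only mild obstacle is bookkeeping rather than mathematics: one must be a little careful with the degenerate cases $m_1=0$ or $m_2=0$ (and the convention $E_{\mathbf 0}=0$), and with checking that the $\underline\oplus$/$\overline\oplus$ block operations genuinely provide biproduct structure maps satisfying the required relations — but all of this is routine given the matrix formalism of Section~1 and the cited results of \cite{ChenZhang}, so I would state it briefly and refer to those.
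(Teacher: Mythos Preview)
Your proposal is correct and follows the same approach as the paper, which simply cites \cite[Proposition 2.14 and Corollary 2.15]{ChenZhang} without further argument. You supply additional verification details (the $\mathbb{F}$-linearity, biproducts, endomorphism computations, and decomposition of a general object) that the paper omits, but the core of both arguments is the appeal to those cited results.
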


Note that $R$ is a $\mathbb{Z}_+$-ring with the unital $\mathbb{Z}_+$-basis
$\{r_1=1, r_2\}$. We have $r_2^2=mr_1+nr_2$ for some $m, n\in\mathbb{N}$ with $m+n>0$.
Let $\mathbf{c}_{22}=(m, n)$ and $\mathbf{c}_{1i}=\mathbf{c}_{i1}=\mathbf{e}_i$, $i=1, 2$.
Then $\mathbf{c}_{ij}=(c_{ij1}, c_{ij2})\in\mathbb{N}^2$ and $r_ir_j=c_{ij1}r_1+c_{ij2}r_2$, where $1\leqslant i, j\leqslant 2$.

For any $\mathbf{m}=(m_1, m_2)$ and $\mathbf{s}=(s_1, s_2)$ in ${\rm Ob}(\mathcal C)$,
define $\mathbf{m}\ot\mathbf{s}\in{\rm Ob}(\mathcal C)$ by
\begin{eqnarray}
\mathbf{m}\ot\mathbf{s}:=\sum_{i, j=1}^2m_is_j\mathbf{c}_{ij}.\label{T}
\end{eqnarray}
Then $\mathbf{m}\ot\mathbf{s}=\mathbf{0}$ if and only if $\mathbf{m}=\mathbf{0}$ or $\mathbf{s}=\mathbf{0}$.
Moreover, $\mathbf{m}\ot\mathbf{e}_1=\mathbf{e}_1\ot\mathbf{m}=\mathbf{m}$.
By the associativity of the ring $R$, one can see that $(\mathbf{m}\ot\mathbf{s})\ot\mathbf{t}
=\mathbf{m}\ot(\mathbf{s}\ot\mathbf{t})$, denoted  $\mathbf{m}\ot\mathbf{s}\ot\mathbf{t}$,
where $\mathbf{m}, \mathbf{s}, \mathbf{t}\in {\rm Ob}(\mathcal C)$.

Define a vector space $M(R, A)$ over $\mathbb{F}$ by
$$M(R, A):=\oplus_{1\leqslant i,i',j,j'\leqslant 2}M_{\mathbf{c}_{i'j'}\times\mathbf{c}_{ij}}(A).$$
Then $M(R, A)$ is an associative $\mathbb{F}$-algebra with the multiplication
defined as follows: if $X\in M_{\mathbf{c}_{i'j'}\times\mathbf{c}_{ij}}(A)$ and
$Y\in M_{\mathbf{c}_{i''j''}\times\mathbf{c}_{i_1j_1}}(A)$,
then $XY$ is the usual matrix product for $(i, j)=(i'',j'')$,
and $XY=0$ for $(i, j)\neq(i'',j'')$.
The identity of $M(R,A)$ is $(E_{\mathbf{c}_{ij}})_{i,j\in\{1,2\}}$,
where $E_{\mathbf{c}_{ij}}\in M_{\mathbf{c}_{ij}}(A)$ is given in the last section.

Note that $A\otimes_{\mathbb F}A$ has an $\mathbb F$-basis $\{e_i\ot e_j|1\<i,j\<2\}$.
Define an $\mathbb F$-linear map $\phi: A\otimes_{\mathbb F}A\ra M(R, A)$ by
$\phi(e_i\otimes_{\mathbb F}e_j)=E_{\mathbf{c}_{ij}}\in M_{\mathbf{c}_{ij}}(A)$, $1\<i, j\<2$.
Then $\phi$ is the unique algebra map from $A\otimes_{\mathbb F}A$ to $M(R, A)$
such that the two conditions $(\phi$1) and $(\phi$2) in \cite[page 266]{ChenZhang} are satisfied.
Moreover, we have
$\phi(x)\in M_{\mathbf{c}_{i'j'}\times\mathbf{c}_{ij}}(A)$ for any $x\in e_{i'}Ae_{i}\ot_{\mathbb F}e_{j'}Ae_j
\subset A\ot_{\mathbb F}A$, where $1\leqslant i', i, j', j\leqslant 2$.

For any homogeneous matrix $X=(x_{ij})$ over $A\ot_{\mathbb F}A$, $\phi(X)=(\phi(x_{ij}))$
is a well-defined homogeneous matrix over $A$.
Let $X\in{\rm Hom}_{\mathcal C}(\mathbf{m}_1, \mathbf{s}_1)$
and $Y\in{\rm Hom}_{\mathcal C}(\mathbf{m}_2, \mathbf{s}_2)$.
Then one  can also define  $X\ot Y\in {\rm Hom}_{\mathcal C}(\mathbf{m}_1\ot\mathbf{m}_2, \mathbf{s}_1\ot\mathbf{s}_2)$
as follows:
\begin{eqnarray}
X\ot Y:=\Pi(\phi(X\ot_{\mathbb F}Y))=P(\mathbf{s}_1, \mathbf{s}_2)\phi(X\ot_{\mathbb F}Y)P(\mathbf{m}_1, \mathbf{m}_2)^T \label{t}
\end{eqnarray}
for $\mathbf{m}_1\ot\mathbf{m}_2\neq\mathbf{0}$ and $\mathbf{s}_1\ot\mathbf{s}_2\neq\mathbf{0}$,
and $X\ot Y:=0$  otherwise.

Let $\mathbf{0}\neq\mathbf{m}=(m_1, m_2)\in\mathbb{N}^2$. For any $1\<i\<2$ with $m_i>0$, and any $1\leqslant k\leqslant m_i$,
define a matrix $Y^{\mathbf m}_{i,k}=(y_1, y_2, \cdots, y_{|\mathbf m|})\in M_{\mathbf{e}_i\times\mathbf{m}}(A)$ by
$$y_j=\left\{\begin{array}{ll}
e_i,& j=k+\d_{i2}m_1,\\
0,& \text{ otherwise},\\
\end{array}\right.$$
where $\d_{i2}$ is the Kronecker symbol. Let
$X^{\mathbf m}_{i,k}=(Y^{\mathbf m}_{i,k})^T$, the transposed matrix of $Y^{\mathbf m}_{i,k}$.
Then $X^{\mathbf m}_{i,k}\in M_{\mathbf{m}\times\mathbf{e}_i}(A)$. We have
$$Y^{\mathbf m}_{i,k}X^{\mathbf m}_{i',k'}
=\left\{\begin{array}{ll}
e_i=E_{\mathbf{e}_i},& (i,k)=(i',k'),\\
0,& \text{ otherwise},\\
\end{array}\right.$$
and $\sum_{i=1}^2\sum_{1\leqslant k\leqslant m_i}X^{\mathbf m}_{i,k}Y^{\mathbf m}_{i,k}
=\sum_{1\leqslant i\leqslant 2,  m_i>0}\sum_{k=1}^{m_i}X^{\mathbf m}_{i,k}Y^{\mathbf m}_{i,k}=E_{\mathbf m}$.

For any $1\<i, j\<2$, let $a_{1,i,j}=a_{i,1,j}=a_{i,j,1}=E_{\mathbf{e}_i\ot\mathbf{e}_j}\in M_{\mathbf{e}_i\ot\mathbf{e}_j}(A)$.
Note that $\mathbf{e}_1\ot\mathbf{e}_i\ot\mathbf{e}_j=\mathbf{e}_i\ot\mathbf{e}_1\ot\mathbf{e}_j
=\mathbf{e}_i\ot\mathbf{e}_j\ot\mathbf{e}_1=\mathbf{c}_{ij}$
and $\mathbf{e}_2\ot\mathbf{e}_2\ot\mathbf{e}_2=(mn,m+n^2)$.

From now on, we assume that $a_{2,2,2}$ is an invertible element in $M_{\mathbf{e}_2\ot\mathbf{e}_2\ot\mathbf{e}_2}(A)$.

\begin{lemma}\label{2.2}
Let $1\<i,j,l,i',j',l'\<2$, $x\in e_{i'}Ae_{i}$, $y\in e_{j'}Ae_{j}$ and $z\in e_{l'}Ae_{l}$.
Then $(x\ot(y\ot z))a_{i, j, l}=a_{i', j', l'}((x\ot y)\ot z)$.
\end{lemma}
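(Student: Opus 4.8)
The claim is a naturality/coherence identity for the associator data $a_{i,j,l}$ with respect to the tensor product of morphisms. The plan is to reduce to the definition \equref{t} and exploit \leref{composition}. Since both sides of the asserted equation are $\mathbb{F}$-linear in each of $x$, $y$, $z$, it suffices to verify the identity on a spanning set; the natural choice is to take $x$, $y$, $z$ to be the elementary morphisms $X^{\mathbf e_i}_{i,1}Y^{\mathbf e_{i'}}_{i',1}$-type matrices, or more simply, since $e_{i'}Ae_i$ is at most one-dimensional (equal to $\mathbb{F}e_i$ when $i=i'$ and $0$ otherwise), the only nonzero case is $i=i'$, $j=j'$, $l=l'$, with $x=\lambda e_i$, $y=\mu e_j$, $z=\nu e_l$ for scalars $\lambda,\mu,\nu$. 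By $\mathbb{F}$-linearity we may further assume $\lambda=\mu=\nu=1$, i.e.\ $x=e_i$, $y=e_j$, $z=e_l$, so that the identity to prove collapses to
$$(e_i\ot(e_j\ot e_l))\,a_{i,j,l}=a_{i,j,l}\,((e_i\ot e_j)\ot e_l),$$
and both $e_i\ot e_j$ and $e_j\ot e_l$ are identity morphisms $E_{\mathbf c_{ij}}$, $E_{\mathbf c_{jl}}$ by the definition $a_{1,i,j}=E_{\mathbf e_i\ot\mathbf e_j}$ and the general formula for $\ot$ on identities.

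First I would record that for identity morphisms the tensor product behaves well: using \equref{t} and the fact that $\phi$ is an algebra map sending $e_i\ot_{\mathbb F}e_j$ to $E_{\mathbf c_{ij}}$, one gets $E_{\mathbf m}\ot E_{\mathbf s}=E_{\mathbf m\ot\mathbf s}$, and more generally $x\ot E_{\mathbf s}$ and $E_{\mathbf m}\ot z$ are the "amplified" block-diagonal images of $x$ and $z$. Concretely, for $x\in e_{i'}Ae_i$ we have $x\ot E_{\mathbf e_j}\in M_{\mathbf c_{i'j}\times\mathbf c_{ij}}(A)$ and for $z\in e_{l'}Ae_l$ we have $E_{\mathbf e_{j'}}\ot z\in M_{\mathbf c_{j'l'}\times\mathbf c_{j'l}}(A)$ (after the appropriate permutation conjugations coming from $\Pi$). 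Then $e_i\ot(e_j\ot e_l)=e_i\ot E_{\mathbf c_{jl}}$ and $(e_i\ot e_j)\ot e_l=E_{\mathbf c_{ij}}\ot e_l$, and since $e_i=E_{\mathbf e_i}$ these are both just $E_{\mathbf e_i\ot\mathbf e_j\ot\mathbf e_l}=E_{\mathbf c_{ij\,\text{or}\,\ldots}}$ — wait, one must be careful: at least one index is $1$, so $\mathbf e_i\ot\mathbf e_j\ot\mathbf e_l$ is one of $\mathbf c_{ij}$, and $E$ of it is the identity. Hence both sides equal $a_{i,j,l}$ composed with an identity, which is $a_{i,j,l}$ itself.

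The only genuine content, then, is bookkeeping: checking that the permutation matrices hidden inside $\ot$ (via $\Pi$ and $P(-,-)$) conjugate things consistently so that $x\ot(y\ot z)$ and $(x\ot y)\ot z$ land in the correct $M_{\mathbf c_{i'j'}\times\mathbf c_{ij}}(A)$ summand and reduce to $E$ when $x=y=z$ are idempotents. I would handle this by invoking the associativity of $\ot_{\mathbb F}$ (stated after its definition), the associativity of $\Pi$'s building blocks $\underline\oplus$, $\overline\oplus$, the recursive definition of $P_{\mathbf m_1,\ldots,\mathbf m_r}$, and \leref{composition} to push $\phi$ through products. Since at least one of $i,j,l$ equals $1$, the objects $\mathbf e_i\ot\mathbf e_j$, $\mathbf e_j\ot\mathbf e_l$ are among $\{\mathbf e_1,\mathbf e_2,\mathbf c_{ij}\}$ and everything stays small, so no subtle combinatorics of the $(mn,m+n^2)$-block arises here.

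\textbf{Main obstacle.} I expect the only real friction to be notational: disentangling the nested permutation conjugations in \equref{t} so as to confirm that $x\ot(y\ot z)$ and $(x\ot y)\ot z$ are literally equal as matrices over $A$ (not merely conjugate), and that when $x,y,z$ are the relevant idempotents both reduce to the identity $E_{\mathbf e_i\ot\mathbf e_j\ot\mathbf e_l}$. This is a finite, mechanical check made tractable by the hypothesis that at least one subscript is $1$ (so one of the three tensor factors is the unit object $\mathbf e_1$ and the corresponding tensor product is "trivial" up to the identity permutation $P_{\mathbf m_1}=I$), together with the associativity properties of $\ot_{\mathbb F}$, $\underline\oplus$, $\overline\oplus$ and the compatibility of $\phi$ with products recorded above.
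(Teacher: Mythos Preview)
Your reduction is exactly the paper's: since $e_{i'}Ae_i=0$ for $i\neq i'$, the only nontrivial case is $(i',j',l')=(i,j,l)$ with $x,y,z$ scalar multiples of idempotents, and by linearity one may take $x=e_i$, $y=e_j$, $z=e_l$. The paper then invokes \cite[Corollary 3.6]{ChenZhang} to conclude that $e_i\ot(e_j\ot e_l)=(e_i\ot e_j)\ot e_l=E_{\mathbf e_i\ot\mathbf e_j\ot\mathbf e_l}$, which is precisely your observation that $E_{\mathbf m}\ot E_{\mathbf s}=E_{\mathbf m\ot\mathbf s}$.

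There is, however, a genuine slip: you repeatedly assert that ``at least one of $i,j,l$ equals $1$'', and you lean on this in both your computation and your ``Main obstacle'' paragraph. That is \emph{not} a hypothesis of \leref{2.2}; the lemma ranges over all $1\leqslant i,j,l\leqslant 2$, and in particular must cover $i=j=l=2$, where $a_{2,2,2}$ is a genuinely nontrivial invertible matrix in $M_{\mathbf e_2\ot\mathbf e_2\ot\mathbf e_2}(A)$. (You may be conflating this lemma with \leref{2.3}, which \emph{does} assume one index is $1$.) Fortunately your own tool already handles this case: from $E_{\mathbf m}\ot E_{\mathbf s}=E_{\mathbf m\ot\mathbf s}$ you get $e_2\ot(e_2\ot e_2)=E_{\mathbf e_2}\ot E_{\mathbf c_{22}}=E_{\mathbf e_2\ot\mathbf e_2\ot\mathbf e_2}$ and likewise $(e_2\ot e_2)\ot e_2=E_{\mathbf e_2\ot\mathbf e_2\ot\mathbf e_2}$, so both sides equal $a_{2,2,2}$. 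Delete the ``at least one index is $1$'' claims and the argument is complete and matches the paper's.
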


\begin{proof}
If $(i',j',l')\neq(i,j,l)$, then $(x\ot(y\ot z))=((x\ot y)\ot z)=0$ since $e_1Ae_2=e_2Ae_1=0$,
 so $(x\ot(y\ot z))a_{i, j, l}=a_{i', j', l'}((x\ot y)\ot z)$.
If $(i',j',l')=(i,j,l)$, then $x=\a e_i$, $y=\b e_j$ and $z=\g e_l$ for some $\a, \b, \g\in\mathbb F$.
In this case, $(x\ot(y\ot z))=((x\ot y)\ot z)=\a\b\g E_{\mathbf{e}_i\ot\mathbf{e}_j\ot\mathbf{e}_l}$
by \cite[Corollary 3.6]{ChenZhang}. Thus $(x\ot(y\ot z))a_{i, j, l}=a_{i, j, l}((x\ot y)\ot z)$.
\end{proof}

Now let us consider the equality Ass(4) in \cite{ChenZhang}:
\begin{eqnarray}
\nonumber&\sum\limits_{j=1}^2\sum\limits_{1\< k\< c_{i_2i_3j}}
(e_{i_1}\ot  a_{i_2, i_3, i_4}(X^{\mathbf{c}_{i_2i_3}}_{j,k}\ot e_{i_4}))
a_{i_1,j,i_4}((e_{i_1}\ot  Y^{\mathbf{c}_{i_2i_3}}_{j,k})a_{i_1,i_2,i_3}\ot e_{i_4})\\
\nonumber=&\sum\limits_{j,j'=1}^2\sum\limits_{1\< k\< c_{i_3i_4j}}
\sum\limits_{1\< k'\< c_{i_1i_2j'}}
(e_{i_1}\ot (e_{i_2}\ot  X^{\mathbf{c}_{i_3i_4}}_{j,k}))a_{i_1, i_2, j}
(X^{\mathbf{c}_{i_1i_2}}_{j',k'}\ot  Y^{\mathbf{c}_{i_3i_4}}_{j,k})\\
&\hspace{3.2cm}\cdot a_{j',i_3,i_4}
((Y^{\mathbf{c}_{i_1i_2}}_{j',k'}\ot  e_{i_3})\ot  e_{i_4}), \label{Ass4}
\end{eqnarray}
where $1\<i_1, i_2, i_3, i_4\<2$.

\begin{lemma}\label{2.3}
If one of $i_1, i_2, i_3, i_4$ is equal to 1, then the equality (\ref{Ass4}) holds.
\end{lemma}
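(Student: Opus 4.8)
The plan is to reduce the verification of the associativity identity \equref{Ass4} to the trivial cases handled by \leref{2.2}. The key observation is that when one of the indices $i_1,i_2,i_3,i_4$ equals $1$, the tensor factor $r_1=1$ acts as a unit in the ring $R$, so the corresponding $\mathbf{c}$-products collapse: $\mathbf{c}_{1i}=\mathbf{c}_{i1}=\mathbf{e}_i$, and the structure matrices $X^{\mathbf{c}_{1i}}_{j,k}$, $Y^{\mathbf{c}_{1i}}_{j,k}$ degenerate drastically. Indeed, since $\mathbf{e}_i=(\delta_{i1},\delta_{i2})$ has $|\mathbf{e}_i|=1$, the only nonzero $X^{\mathbf{e}_i}_{j,k}$ occurs for $j=i$, $k=1$, and it equals $E_{\mathbf{e}_i}=e_i$; similarly for $Y^{\mathbf{e}_i}_{i,1}$. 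So each sum on the two sides of \equref{Ass4} reduces to a single term.

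\textbf{Step 1: Handle the associator normalizations.} Recall that $a_{1,i,j}=a_{i,1,j}=a_{i,j,1}=E_{\mathbf{e}_i\ot\mathbf{e}_j}$ by definition. So whenever two of the four indices (say the middle triple contains a $1$) the corresponding $a$'s appearing in \equref{Ass4} are just identity morphisms $E_{\bullet}$. I would enumerate the four cases $i_1=1$, $i_2=1$, $i_3=1$, $i_4=1$ separately (they are not quite symmetric because of the bracketing), and in each case substitute the degenerate values of $X^{\mathbf{c}}_{j,k}$, $Y^{\mathbf{c}}_{j,k}$ and $a_{\bullet}$ into both sides.

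\textbf{Step 2: Reduce to \leref{2.2} or to a direct identity.} After Step 1, both sides of \equref{Ass4} become products of morphisms of the form $e_\bullet\ot(\text{something})$ or $(\text{something})\ot e_\bullet$ composed with the remaining associators $a_{i,j,l}$ where now $(i,j,l)$ has at most one entry $\ne1$ in the relevant slots, or is the single genuinely nontrivial triple. In the cases where all surviving associators are of the normalized form $E_{\bullet}$, the identity follows from \leref{composition} (functoriality of $\ot$ under composition) together with the relations $Y^{\mathbf m}_{i,k}X^{\mathbf m}_{i',k'}=\delta_{(i,k),(i',k')}E_{\mathbf{e}_i}$ and $\sum X^{\mathbf m}_{i,k}Y^{\mathbf m}_{i,k}=E_{\mathbf m}$ established in the excerpt. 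In the case where a genuine $a_{2,2,2}$-type associator survives (this happens, e.g., when $i_1=1$ so the outer $e_{i_1}=e_1$ is tensored on the left and the inner triple $(i_2,i_3,i_4)$ can be $(2,2,2)$), I would invoke \leref{2.2} with the appropriate choice of $x,y,z$ — specifically $x=e_1$ — to move the associator past the $\ot$, and then again reduce to the matrix identities.

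\textbf{Main obstacle.} The genuinely delicate point is the bookkeeping of the permutation matrices $P(\mathbf{m}_1,\mathbf{m}_2)$ hidden inside the definition \equref{t} of $X\ot Y$, together with correctly tracking which block of $M(R,A)$ each homogeneous piece lives in when $\mathbf{c}_{ij}$ is replaced by $\mathbf{e}_i$. One must check that the degenerate matrices $X^{\mathbf{e}_i}_{j,k}$ really do reduce as claimed and that $e_{i'}\ot(e_{j'}\ot X^{\mathbf{e}_l}_{j,k})$ etc.\ simplify to scalar multiples of the standard $X$-matrices for the composite object; this is where \leref{2.2} and \cite[Corollary 3.6]{ChenZhang} (scalars pass through $\ot$) do the real work. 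Once the degeneracies are correctly identified, each of the four cases collapses to an equality between two single terms that match after applying functoriality; so the obstacle is entirely in the careful case analysis and index manipulation rather than in any deep structural fact. I expect the write-up to consist of four short paragraphs, one per case, each a few lines long.
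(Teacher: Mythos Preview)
Your proposal is correct and is precisely the ``straightforward verification'' that the paper's one-line proof alludes to; the paper gives no further detail, so your plan to split into the four cases $i_\nu=1$, collapse the sums using $\mathbf{c}_{1i}=\mathbf{c}_{i1}=\mathbf{e}_i$ and $X^{\mathbf{e}_i}_{i,1}=Y^{\mathbf{e}_i}_{i,1}=e_i$, and then invoke the normalization $a_{1,i,j}=a_{i,1,j}=a_{i,j,1}=E_{\mathbf{e}_i\ot\mathbf{e}_j}$ together with the resolution-of-identity $\sum X^{\mathbf m}_{i,k}Y^{\mathbf m}_{i,k}=E_{\mathbf m}$ is exactly the intended route. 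The only minor remark is that in most of the four cases you will not even need \leref{2.2}: once the degenerate $X,Y$'s and trivial associators are substituted, both sides of \equref{Ass4} reduce directly to $a_{i_2,i_3,i_4}$ (resp.\ the analogous surviving associator), and Lemma~\ref{2.2} is superfluous there.
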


\begin{proof}
Follows from a straightforward verification.
\end{proof}

When $i_1=i_2=i_3=i_4=2$, the equality (\ref{Ass4}) becomes
\begin{eqnarray}
\nonumber&\sum\limits_{j=1}^2\sum\limits_{1\< k\< c_{22j}}
(e_{2}\ot  a_{2, 2, 2}(X^{\mathbf{c}_{22}}_{j,k}\ot e_{2}))
a_{2,j,2}((e_{2}\ot  Y^{\mathbf{c}_{22}}_{j,k})a_{2,2,2}\ot e_{2})\\
\nonumber=&\sum\limits_{j,j'=1}^2\sum\limits_{1\< k\< c_{22j}}
\sum\limits_{1\< k'\< c_{22j'}}
(e_{2}\ot (e_{2}\ot  X^{\mathbf{c}_{22}}_{j,k}))a_{2, 2, j}
(X^{\mathbf{c}_{22}}_{j',k'}\ot  Y^{\mathbf{c}_{22}}_{j,k})\\
&\hspace{3.2cm}\cdot a_{j',2,2}((Y^{\mathbf{c}_{22}}_{j',k'}\ot e_{2})\ot e_{2}).\label{a222}
\end{eqnarray}

 To simplify the equality (\ref{a222}), we use the $6$-$j$ symbols of semisimple tensor categories.

Let $H^{ij}_k=$ ${\rm Hom}_{\mathcal C}(\mathbf{e}_k,\mathbf{e}_i\otimes\mathbf{e}_j)=M_{\mathbf{c}_{ij}\times \mathbf{e}_k}(A)$ be the multiplicity spaces.
Then ${\rm dim}(H^{ij}_k)=c_{ijk}$, and the decomposition of the tensor product of simple objects can be described as follows:
 $$\mathbf{e}_i\otimes\mathbf{e}_j\cong \oplus_{k=1}^2{\rm dim}(H^{ij}_k)\mathbf{e}_k.$$
Hence we have
$$(\mathbf{e}_i\otimes\mathbf{e}_j)\otimes \mathbf{e}_l\cong \oplus_{s=1}^2\oplus_{k=1}^2{\rm dim}(H^{kl}_s\otimes_{\mathbb{F}} H^{ij}_k)\mathbf{e}_s$$
and
$$\mathbf{e}_i\otimes(\mathbf{e}_j\otimes \mathbf{e}_l)\cong \oplus_{s=1}^2\oplus_{k=1}^2{\rm dim}(H^{ik}_s\otimes_{\mathbb{F}} H^{jl}_k)\mathbf{e}_s.$$

\begin{lemma}\label{2.4}
For any $1\<i,j,k,l,s\<2$, there is a linear injection
$$\s: H^{kl}_s\otimes_{\mathbb{F}} H^{ij}_k\rightarrow{\rm Hom}_{\mathcal C}
(\mathbf{e}_s,(\mathbf{e}_i\otimes\mathbf{e}_j)\otimes \mathbf{e}_l),\
f\otimes_{\mathbb{F}}g\mapsto (g\otimes{\rm id}_{\mathbf{e}_l})f,$$
which induces a linear isomorphism:
$$\oplus_{k=1}^2 H^{kl}_s\otimes_{\mathbb{F}} H^{ij}_k\cong{\rm  Hom}_{\mathcal C}
(\mathbf{e}_s,(\mathbf{e}_i\otimes\mathbf{e}_j)\otimes \mathbf{e}_l).$$
\end{lemma}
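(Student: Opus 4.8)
The plan is to show first that the map $\sigma$ is well-defined and linear (immediate, since tensoring with $\mathrm{id}_{\mathbf{e}_l}$ and composition are $\mathbb{F}$-bilinear), then to prove injectivity, and finally to assemble the direct-sum isomorphism by a dimension count. For injectivity, I would use the explicit bases of the multiplicity spaces coming from the matrices $X^{\mathbf{m}}_{i,k}$ and $Y^{\mathbf{m}}_{i,k}$ introduced earlier. Concretely, $H^{ij}_k = M_{\mathbf{c}_{ij}\times\mathbf{e}_k}(A)$ has the $\mathbb{F}$-basis $\{X^{\mathbf{c}_{ij}}_{k,p} : 1\leqslant p\leqslant c_{ijk}\}$, with the dual relations $Y^{\mathbf{c}_{ij}}_{k,p}X^{\mathbf{c}_{ij}}_{k',p'} = \delta_{(k,p),(k',p')}e_k$ and $\sum_{k,p} X^{\mathbf{c}_{ij}}_{k,p}Y^{\mathbf{c}_{ij}}_{k,p} = E_{\mathbf{c}_{ij}}$. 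Given $f = X^{\mathbf{c}_{kl}}_{s,q}$ and $g = X^{\mathbf{c}_{ij}}_{k,p}$, the image $\sigma(f\otimes g) = (g\otimes\mathrm{id}_{\mathbf{e}_l})f$ lands in $\mathrm{Hom}_{\mathcal C}(\mathbf{e}_s,(\mathbf{e}_i\otimes\mathbf{e}_j)\otimes\mathbf{e}_l)$; I would recover $f$ and $g$ from this composite by precomposing with $X^{\mathbf{c}_{kl}}_{s,q'}$ (isolating the $\mathbf{e}_k$-summand inside $\mathbf{e}_i\otimes\mathbf{e}_j$) and postcomposing with the appropriate $Y$-matrices, using $Y^{\mathbf{c}_{kl}}_{s,q'}f = \delta_{qq'}e_s$ and Lemma~\ref{composition} to evaluate the resulting matrix products. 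This produces a one-sided inverse on basis elements, forcing $\sigma$ injective.

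For the induced isomorphism, the key point is the semisimplicity of $\mathcal C$ (Proposition~\ref{2.1}): every object is a finite direct sum of copies of $\mathbf{e}_1$ and $\mathbf{e}_2$, so Hom-spaces are computed componentwise and their dimensions are additive over direct-sum decompositions. Since $(\mathbf{e}_i\otimes\mathbf{e}_j)\otimes\mathbf{e}_l \cong \oplus_{s=1}^2\oplus_{k=1}^2 \dim(H^{kl}_s\otimes_{\mathbb F}H^{ij}_k)\mathbf{e}_s$ as already recorded before the lemma, we get
$$\dim\mathrm{Hom}_{\mathcal C}(\mathbf{e}_s,(\mathbf{e}_i\otimes\mathbf{e}_j)\otimes\mathbf{e}_l) = \sum_{k=1}^2 \dim(H^{kl}_s\otimes_{\mathbb F}H^{ij}_k) = \sum_{k=1}^2 \dim\bigl(H^{kl}_s\otimes_{\mathbb F}H^{ij}_k\bigr).$$
The maps $\sigma$ for $k=1,2$ have images in the two distinct isotypic-type summands indexed by the intermediate object $\mathbf{e}_k$ (more precisely, $\sigma(f\otimes g)$ for $g\in H^{ij}_k$ factors through the $\mathbf{e}_k$-component of $\mathbf{e}_i\otimes\mathbf{e}_j$), so their images are linearly independent and their direct sum is an injection $\oplus_{k=1}^2 H^{kl}_s\otimes_{\mathbb F}H^{ij}_k \hookrightarrow \mathrm{Hom}_{\mathcal C}(\mathbf{e}_s,(\mathbf{e}_i\otimes\mathbf{e}_j)\otimes\mathbf{e}_l)$ between spaces of equal finite dimension, hence an isomorphism.

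I expect the main obstacle to be the bookkeeping in the injectivity step: one must check carefully that precomposing $\sigma(f\otimes g)$ with $X^{\mathbf{c}_{kl}}_{s,q}$ and the structure morphisms genuinely separates the tensor factors $f$ and $g$, i.e.\ that the matrix identities $(g\otimes\mathrm{id}_{\mathbf{e}_l})f$ behave as expected under the permutation-matrix conjugations hidden in the definition \eqref{t} of $\otimes$ on morphisms. This is where Lemma~\ref{composition}, the orthogonality relations for the $X$'s and $Y$'s, and the explicit form of $\Pi$ (equivalently $P_{\mathbf{m}_1,\dots,\mathbf{m}_r}$) all have to be combined; once the single-$k$ computation is pinned down, the direct-sum statement is a formal consequence of semisimplicity and the dimension identity above.
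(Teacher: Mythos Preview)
Your approach is correct but differs from the paper's. The paper proceeds by explicit case-by-case computation: for each choice of $(i,j,l)$ it writes down $(X^{\mathbf{c}_{ij}}_{k,q}\otimes e_l)X^{\mathbf{c}_{kl}}_{s,p}$ and identifies it with a specific basis vector $X^{\mathbf{e}_i\otimes\mathbf{e}_j\otimes\mathbf{e}_l}_{s,u}$ of the target, thereby exhibiting $\sigma$ as sending a basis of the source bijectively onto a basis of the target (so injectivity and the direct-sum isomorphism come out simultaneously). Your route instead builds a left inverse from the orthogonality relations $Y^{\mathbf{c}_{ij}}_{k',p'}X^{\mathbf{c}_{ij}}_{k,p}=\delta_{(k,p),(k',p')}e_k$ together with functoriality of $\otimes$, and then closes with a dimension count via semisimplicity. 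This is cleaner and avoids the case split; what you lose is the explicit index formula $u=\sum_{w<k}c_{wls}c_{ijw}+(q-1)c_{kls}+p$, which the paper extracts from its computation and uses immediately afterwards to write the $6$-$j$ symbols as submatrices of $a_{i,j,l}$. If you go your way, you will still need to supply that explicit identification separately before the discussion of the matrices $A_s$ in \eqref{relatedmatrix}.

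One small wording issue: you speak of ``precomposing with $X^{\mathbf{c}_{kl}}_{s,q'}$'' and then invoke $Y^{\mathbf{c}_{kl}}_{s,q'}f=\delta_{qq'}e_s$, but what you actually want is to \emph{postcompose} $\sigma(f\otimes g)$ with $(Y^{\mathbf{c}_{ij}}_{k',p'}\otimes e_l)$, using functoriality to get $(Y^{\mathbf{c}_{ij}}_{k',p'}g\otimes e_l)f$; this kills the terms with $k\neq k'$ or $p\neq p'$ and returns $f$ (since $e_k\otimes e_l=E_{\mathbf{c}_{kl}}$). The same move with varying $k'$ also gives you the independence of the images for $k=1$ and $k=2$, so your ``isotypic summand'' remark is justified without further work. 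Note that the functoriality you need, $(h_1\otimes e_l)(h_2\otimes e_l)=(h_1h_2)\otimes e_l$, is not literally Lemma~\ref{composition} (which concerns $\otimes_{\mathbb F}$) but its consequence for the tensor product \eqref{t}; in the paper this is cited as \cite[Corollary~3.6]{ChenZhang}.
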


\begin{proof}
If $H^{kl}_s\otimes_{\mathbb{F}} H^{ij}_k=0$, then $\s$ is obviously injective.
If $H^{kl}_s\otimes_{\mathbb{F}} H^{ij}_k\neq 0$, then $H^{kl}_s$ has an $\mathbb F$-basis
$\{X^{\mathbf{c}_{kl}}_{s,p}|1\<p\<c_{kls}\}$, and $H^{ij}_k$ has an $\mathbb F$-basis
$\{X^{\mathbf{c}_{ij}}_{k,q}|1\<q\<c_{ijk}\}$.

We first consider the case of $l=1$. In this case,
$(\mathbf{e}_i\otimes\mathbf{e}_j)\otimes \mathbf{e}_l=\mathbf{e}_i\otimes\mathbf{e}_j=\mathbf{c}_{ij}$.
If $s\neq k$, then $H^{k1}_s=0$. Hence $\s$ is injective. If $s=k$, then $c_{k1k}=1$ and
$(X^{\mathbf{c}_{ij}}_{k,q}\otimes{\rm id}_{\mathbf{e}_1})X^{\mathbf{c}_{k1}}_{k,1}
=(X^{\mathbf{c}_{ij}}_{k,q}\ot e_1)e_k=X^{\mathbf{c}_{ij}}_{k,q}$ for all $1\<q\<c_{ijk}$.
Thus, $\s$ is bijective, and  the lemma holds in this case.

Now suppose that $l=2$. First let $i=j=1$. Then $(\mathbf{e}_i\otimes\mathbf{e}_j)\otimes \mathbf{e}_l=\mathbf{e}_2$.
In this case, if $k=2$ or $s=k=1$, then $H^{kl}_s\otimes_{\mathbb{F}} H^{ij}_k=0$; if $k=1$ and $s=2$,
then $c_{122}=c_{111}=1$ and $(X^{\mathbf{c}_{11}}_{1,1}\otimes{\rm id}_{\mathbf{e}_2})X^{\mathbf{c}_{12}}_{2,1}
=(e_1\ot e_2)e_2=X^{\mathbf{c}_{12}}_{2,1}$.
Thus, the lemma holds for $l=2$ and $i=j=1$.
Next, let $i\neq j$. Then $\mathbf{e}_i\otimes\mathbf{e}_j=\mathbf{c}_{ij}=\mathbf{e}_2$ and
$(\mathbf{e}_i\otimes\mathbf{e}_j)\otimes \mathbf{e}_l=\mathbf{e}_2\otimes\mathbf{e}_2=\mathbf{c}_{22}$.
Note that $c_{221}=m$ and $c_{222}=n$.
If $k=1$, then $H^{kl}_s\otimes_{\mathbb{F}} H^{ij}_k=0$.
If $k=2$ and $s=1$, then by \cite[Corollary 3.6]{ChenZhang},
$(X^{\mathbf{c}_{ij}}_{2,1}\otimes{\rm id}_{\mathbf{e}_2})X^{\mathbf{c}_{22}}_{1,p}
=(e_2\ot e_2)X^{\mathbf{c}_{22}}_{1,p}=E_{\mathbf{c}_{22}}X^{\mathbf{c}_{22}}_{1,p}=X^{\mathbf{c}_{22}}_{1,p}$ for all $1\<p\<m$.
If $s=k=2$, then similarly,
$(X^{\mathbf{c}_{ij}}_{2,1}\otimes{\rm id}_{\mathbf{e}_2})X^{\mathbf{c}_{22}}_{2,p}
=X^{\mathbf{c}_{22}}_{2,p}$ for all $1\<p\<n$. Thus, the lemma holds for $l=2$ and $i\neq j$.
Finally, let $i=j=2$. Then $\mathbf{e}_i\otimes\mathbf{e}_j=\mathbf{c}_{22}$ and
$\mathbf{e}_i\otimes\mathbf{e}_j\otimes \mathbf{e}_l=\mathbf{e}_2\otimes\mathbf{e}_2\otimes \mathbf{e}_2=mn\mathbf{e}_1\oplus(m+n^2)\mathbf{e}_2$.
If $s=k=1$, then $H^{kl}_s\otimes_{\mathbb{F}} H^{ij}_k=0$.
If $s=1$ and $k=2$, then $(X^{\mathbf{c}_{22}}_{2,q}\ot{\rm id}_{\mathbf{e}_2})X^{\mathbf{c}_{22}}_{1,p}
=X^{\mathbf{e}_2\otimes\mathbf{e}_2\otimes \mathbf{e}_2}_{1,m(q-1)+p}$
for all $1\<p\<m$ and $1\<q\<n$. This shows the lemma for $s=1$ and $l=i=j=2$.
If $s=2$ and $k=1$, then
$(X^{\mathbf{c}_{22}}_{1,q}\ot{\rm id}_{\mathbf{e}_2})X^{\mathbf{c}_{12}}_{2,1}
=X^{\mathbf{e}_2\otimes\mathbf{e}_2\otimes \mathbf{e}_2}_{2,q}$ for all $1\<q\<m$.
If $s=2$ and $k=2$, then
$(X^{\mathbf{c}_{22}}_{2,q}\ot{\rm id}_{\mathbf{e}_2})X^{\mathbf{c}_{22}}_{2,p}
=X^{\mathbf{e}_2\otimes\mathbf{e}_2\otimes \mathbf{e}_2}_{2,m+n(q-1)+p}$ for all $1\<p, q\<n$.
Thus, the lemma holds for $l=i=j=s=2$. This completes the proof.
\end{proof}

\begin{lemma}\label{2.5}
For any $1\<i,j,k,l,s\<2$, there is a linear injection
$$H^{ik}_s\otimes_{\mathbb{F}} H^{jl}_k\rightarrow {\rm Hom}_{\mathcal C}
(\mathbf{e}_s,\mathbf{e}_i\otimes(\mathbf{e}_j\otimes \mathbf{e}_l)),\
f\ot_{\mathbb F}g\mapsto({\rm id}_{\mathbf{e}_i}\otimes g)f$$
which induces a linear isomorphism
$$\oplus_{k=1}^2 H^{ik}_s\otimes_{\mathbb{F}} H^{jl}_k\cong{\rm Hom}_{\mathcal C}
(\mathbf{e}_s,\mathbf{e}_i\otimes(\mathbf{e}_j\otimes \mathbf{e}_l)).$$
\end{lemma}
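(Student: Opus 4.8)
The plan is to mirror the proof of Lemma~\ref{2.4} essentially verbatim, exploiting the left--right symmetry between the two constructions. Concretely, the map $f\otimes_{\mathbb F}g\mapsto({\rm id}_{\mathbf e_i}\otimes g)f$ is well-defined because $g\in H^{jl}_k={\rm Hom}_{\mathcal C}(\mathbf e_k,\mathbf e_j\otimes\mathbf e_l)$ gives ${\rm id}_{\mathbf e_i}\otimes g\in{\rm Hom}_{\mathcal C}(\mathbf e_i\otimes\mathbf e_k,\mathbf e_i\otimes(\mathbf e_j\otimes\mathbf e_l))$, and $f\in H^{ik}_s={\rm Hom}_{\mathcal C}(\mathbf e_s,\mathbf e_i\otimes\mathbf e_k)$, so the composite lands in ${\rm Hom}_{\mathcal C}(\mathbf e_s,\mathbf e_i\otimes(\mathbf e_j\otimes\mathbf e_l))$ as claimed. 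First I would dispose of the trivial case $H^{ik}_s\otimes_{\mathbb F}H^{jl}_k=0$, where injectivity is automatic; then fix the $\mathbb F$-bases $\{X^{\mathbf c_{ik}}_{s,p}\mid 1\<p\<c_{iks}\}$ of $H^{ik}_s$ and $\{X^{\mathbf c_{jl}}_{k,q}\mid 1\<q\<c_{jlk}\}$ of $H^{jl}_k$ as in the previous lemma.

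Next I would run the same case analysis on whether $i$, $j$, $l$, or $k$ equals $1$, using $\mathbf e_1\otimes\mathbf m=\mathbf m\otimes\mathbf e_1=\mathbf m$ and the identifications $a_{1,j,l}=a_{i,1,l}=a_{i,j,1}=E_{\mathbf e_i\otimes\mathbf e_j}$ together with \cite[Corollary 3.6]{ChenZhang}. The case $i=1$ collapses the outer tensor factor: $\mathbf e_1\otimes(\mathbf e_j\otimes\mathbf e_l)=\mathbf c_{jl}$, $s$ must equal $k$, and $({\rm id}_{\mathbf e_1}\otimes X^{\mathbf c_{jl}}_{k,q})X^{\mathbf c_{1k}}_{k,1}=X^{\mathbf c_{jl}}_{k,q}$, so $\s$ is bijective. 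The cases $j=1$ and $l=1$ collapse the inner tensor factor similarly. The only substantive case is $i=j=l=2$, where $\mathbf e_2\otimes(\mathbf e_2\otimes\mathbf e_2)=mn\,\mathbf e_1\oplus(m+n^2)\,\mathbf e_2$; here I would compute $({\rm id}_{\mathbf e_2}\otimes X^{\mathbf c_{22}}_{k,q})X^{\mathbf c_{2k}}_{s,p}$ explicitly for each $(s,k)\in\{1,2\}^2$ and check that, as $p,q$ range over their index sets, these composites are pairwise distinct standard matrices $X^{\mathbf e_2\otimes\mathbf e_2\otimes\mathbf e_2}_{s,\bullet}$, exactly as in Lemma~\ref{2.4} but with the roles of the two bases and the index offsets adjusted to reflect that now the \emph{right} factor $\mathbf e_2\otimes\mathbf e_2$ is resolved first. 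Summing the images of $\s$ over $k=1,2$ then exhausts a basis of ${\rm Hom}_{\mathcal C}(\mathbf e_s,\mathbf e_i\otimes(\mathbf e_j\otimes\mathbf e_l))$ on dimension grounds, since $\sum_{k}c_{iks}c_{jlk}=\dim{\rm Hom}_{\mathcal C}(\mathbf e_s,\mathbf e_i\otimes(\mathbf e_j\otimes\mathbf e_l))$ by associativity of $R$.

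The main obstacle, such as it is, will be purely bookkeeping: getting the index bijection right in the case $i=j=l=2$, i.e.\ verifying that $({\rm id}_{\mathbf e_2}\otimes X^{\mathbf c_{22}}_{k,q})X^{\mathbf c_{2k}}_{s,p}$ hits the correct basis vector $X^{\mathbf e_2\otimes\mathbf e_2\otimes\mathbf e_2}_{s,*}$ with the offset $*$ a specific affine function of $p$ and $q$. This amounts to tracing through the definition \equref{t} of $\otimes$ on morphisms via $\Pi$ and the permutation matrices $P(-,-)$, and it is the formal transpose of the computation already carried out for $\s$ in Lemma~\ref{2.4}; I would either reproduce it with the indices swapped or, more economically, note that the map $X\mapsto X^T$ (transpose of matrices over $A$) together with the evident flip of $\mathbb N^2$-data intertwines the two bracketings and carries the injection of Lemma~\ref{2.4} to the one asserted here, so that injectivity and surjectivity transfer for free. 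Either route closes the proof.
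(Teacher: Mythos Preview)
Your proposal is correct and matches the paper's own proof, which simply reads ``Similar to the proof of Lemma~\ref{2.4}.'' You have unpacked what ``similar'' means in more detail than the paper bothers to, but the strategy---mirror the case analysis of Lemma~\ref{2.4} with the left/right roles swapped---is exactly the intended one.
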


\begin{proof}
Similar to the proof of Lemma \ref{2.4}.
\end{proof}

Let $1\<i,j,l,s\<2$. Then the associator
$a_{i,j,l}: (\mathbf{e}_i\otimes\mathbf{e}_j)\otimes \mathbf{e}_l\ra\mathbf{e}_i\otimes(\mathbf{e}_j\otimes \mathbf{e}_l)$
induces a $k$-linear isomorphism
$$\phi^{l,s}_{i,j}: {\rm Hom}_{\mathcal C}(\mathbf{e}_s, (\mathbf{e}_i\otimes\mathbf{e}_j)\otimes \mathbf{e}_l))
\ra {\rm Hom}_{\mathcal C}(\mathbf{e}_s,\mathbf{e}_i\otimes(\mathbf{e}_j\otimes \mathbf{e}_l)),
f\mapsto a_{i,j,l}f.$$
We may regard the isomorphisms in Lemmas \ref{2.4} and \ref{2.5} as identity maps. Then the above isomorphism can be regarded as the following isomorphism
$$\phi^{l,s}_{i,j}: \oplus_{k=1}^{2}  H^{kl}_s\otimes_{\mathbb{F}} H^{ij}_k\rightarrow \oplus_{t=1}^{2} H^{it}_s\otimes_{\mathbb{F}} H^{jl}_t.$$
In this case, the $6$-$j$ symbols are defined to be the component maps of $\phi^{l,s}_{i,j}$:
$$\left\{\begin{array}{ccc}
      i & j & k \\
      l & s & t
    \end{array}\right\}: H^{kl}_s\otimes_{\mathbb{F}} H^{ij}_k\rightarrow H^{it}_s\otimes_{\mathbb{F}} H^{jl}_t,$$
where $1\<k,t\<2$. Note that $\{X^{\mathbf{c}_{kl}}_{s,p}|1\<p\<c_{kls}\}$, $\{X^{\mathbf{c}_{ij}}_{k,q}|1\<q\<c_{ijk}\}$,
$\{X^{\mathbf{c}_{it}}_{s,d}|1\<d\<c_{its}\}$ and $\{X^{\mathbf{c}_{jl}}_{t,r}|1\<r\<c_{jlt}\}$ are the bases of $H^{kl}_s$,
$H^{ij}_k$, $H^{it}_s$ and $H^{jl}_t$, respectively.
Under the identification described above, one can identify $X^{\mathbf{c}_{kl}}_{s,p} \otimes_{\mathbb{F}} X^{\mathbf{c}_{ij}}_{k,q}$
with $X^{\mathbf{e}_i\otimes\mathbf{e}_j\otimes \mathbf{e}_l}_{s,u}$,
and $X^{\mathbf{c}_{it}}_{s,d} \otimes_{\mathbb{F}}X^{\mathbf{c}_{jl}}_{t,r}$ with $X^{\mathbf{e}_i\otimes\mathbf{e}_j\otimes \mathbf{e}_l}_{s,v}$,
respectively, where $u=\sum_{1\<w\<k-1}c_{wls}c_{ijw}+(q-1)c_{kls}+p$ and $v=\sum_{1\<w\<t-1}c_{iws}c_{jlw}+(r-1)c_{its}+d$.

Let $B=\{B^1, B^2\}$ and $C=\{C^1, C^2\}$ be the (ordered) bases of $\oplus_{k=1}^{2}  H^{kl}_s\otimes_{\mathbb{F}} H^{ij}_k$
and $\oplus_{t=1}^{2} H^{it}_s\otimes_{\mathbb{F}} H^{jl}_t$ given by
$$B^k=\left\{\begin{array}{c}
X^{\mathbf{c}_{kl}}_{s,1}\ot_{\mathbb F}X^{\mathbf{c}_{ij}}_{k,1}, X^{\mathbf{c}_{kl}}_{s,2}\ot_{\mathbb F}X^{\mathbf{c}_{ij}}_{k,1},
\cdots, X^{\mathbf{c}_{kl}}_{s,c_{kls}}\ot_{\mathbb F}X^{\mathbf{c}_{ij}}_{k,1},\\
X^{\mathbf{c}_{kl}}_{s,1}\ot_{\mathbb F}X^{\mathbf{c}_{ij}}_{k,2}, X^{\mathbf{c}_{kl}}_{s,2}\ot_{\mathbb F}X^{\mathbf{c}_{ij}}_{k,2},
\cdots, X^{\mathbf{c}_{kl}}_{s,c_{kls}}\ot_{\mathbb F}X^{\mathbf{c}_{ij}}_{k,2},\\
\cdots\cdots\cdots, \\
X^{\mathbf{c}_{kl}}_{s,1}\ot_{\mathbb F}X^{\mathbf{c}_{ij}}_{k,c_{ijk}}, X^{\mathbf{c}_{kl}}_{s,2}\ot_{\mathbb F}X^{\mathbf{c}_{ij}}_{k,c_{ijk}},
\cdots, X^{\mathbf{c}_{kl}}_{s,c_{kls}}\ot_{\mathbb F}X^{\mathbf{c}_{ij}}_{k,c_{ijk}}\\
\end{array}\right\}, \ k=1, 2,$$
and
$$C^t=\left\{\begin{array}{c}
X^{\mathbf{c}_{it}}_{s,1}\ot_{\mathbb F}X^{\mathbf{c}_{jl}}_{t,1}, X^{\mathbf{c}_{it}}_{s,2}\ot_{\mathbb F}X^{\mathbf{c}_{jl}}_{t,1},
\cdots, X^{\mathbf{c}_{it}}_{s,c_{its}}\ot_{\mathbb F}X^{\mathbf{c}_{jl}}_{t,1},\\
X^{\mathbf{c}_{it}}_{s,1}\ot_{\mathbb F}X^{\mathbf{c}_{jl}}_{t,2}, X^{\mathbf{c}_{it}}_{s,2}\ot_{\mathbb F}X^{\mathbf{c}_{jl}}_{t,2},
\cdots, X^{\mathbf{c}_{it}}_{s,c_{its}}\ot_{\mathbb F}X^{\mathbf{c}_{jl}}_{t,2},\\
\cdots\cdots\cdots,\\
X^{\mathbf{c}_{it}}_{s,1}\ot_{\mathbb F}X^{\mathbf{c}_{jl}}_{t,c_{jlt}}, X^{\mathbf{c}_{it}}_{s,2}\ot_{\mathbb F}X^{\mathbf{c}_{jl}}_{t,c_{jlt}},
\cdots, X^{\mathbf{c}_{it}}_{s,c_{its}}\ot_{\mathbb F}X^{\mathbf{c}_{jl}}_{t,c_{jlt}}\\
\end{array}\right\},\ t=1,2.$$
Under the identification as before, both $B$ and $C$ are the canonical bases of
$${\rm Hom}_{\mathcal C}(\mathbf{e}_s,\mathbf{e}_i\ot\mathbf{e}_j\ot\mathbf{e}_l)
=M_{\mathbf{e}_i\ot\mathbf{e}_j\ot\mathbf{e}_l\times \mathbf{e}_s}(A).$$
With respect to two bases $B$ and $C$, the matrix of the linear isomorphism $\phi^{l,s}_{i,j}$ is $A_s$, i.e.,
\begin{eqnarray}
\nonumber&\phi^{l,s}_{i,j}(X^{\mathbf{c}_{1l}}_{s,1}\ot_{\mathbb F}X^{\mathbf{c}_{ij}}_{1,1},
\cdots, X^{\mathbf{c}_{2l}}_{s,c_{2ls}}\ot_{\mathbb F}X^{\mathbf{c}_{ij}}_{2,c_{ij2}})\\
  =&(X^{\mathbf{c}_{i1}}_{s,1}\ot_{\mathbb F}X^{\mathbf{c}_{jl}}_{1,1},
  \cdots, X^{\mathbf{c}_{i2}}_{s,c_{i2s}}\ot_{\mathbb F}X^{\mathbf{c}_{jl}}_{2,c_{jl2}})A_s, \label{relatedmatrix}
\end{eqnarray}
where $A_s$ is a square matrix over $\mathbb{F}e_s$ such that
$a_{i,j,l}=\left(\begin{array}{cc}
A_1&0\\
0&A_2\\
\end{array}\right)$.
In the following, we call $B^k$ and $C^t$ the standard bases of vector space $H^{kl}_s\otimes_{\mathbb{F}} H^{ij}_k$ and  $H^{it}_s\otimes_{\mathbb{F}} H^{jl}_t$, respectively.

\begin{proposition}\label{2.6} The commutative pentagon diagram is equivalent to the Biedenharn-Elliot identity.
That is, the equality (\ref{Ass4}) is equivalent to
$$\begin{array}{l}
\sum\limits_{j=1}^2({\rm id}^{j_1j_7}_{j_0}\ot_{\mathbb{F}}\left\{\begin{array}{ccc}
                                                   j_2& j_3 & j \\
                                                   j_4 & j_7 & j_8
                                                 \end{array}\right\})
 (\left\{\begin{array}{ccc}
 j_1& j & j_6 \\
 j_4 & j_0 & j_7
 \end{array}\right\}\ot_{\mathbb{F}}{\rm id}^{j_2j_3}_{j} )({\rm id}^{j_6j_4}_{j_0}\ot_{\mathbb{F}} \left\{\begin{array}{ccc}
                                                   j_1& j_2 & j_5 \\
                                                   j_3 & j_6 & j
                                                 \end{array}\right\})\\
 =(\left\{\begin{array}{ccc}
 j_1& j_2 & j_5 \\
 j_8 & j_0 & j_7
 \end{array}\right\}\ot_{\mathbb{F}}{\rm id}^{j_3j_4}_{j_8} )P_{23}(\left\{\begin{array}{ccc}
 j_5& j_3 & j_6 \\
 j_4 & j_0 & j_8
 \end{array}\right\}\ot_{\mathbb{F}}{\rm id}^{j_1j_2}_{j_5})\\
 \end{array}$$
for all $j_i=1,2$, $0\<i\<8$, where ${\rm id}^{jl}_k$ denotes the identity map on $H^{jl}_k$,
$P_{23}$ denotes the flip map of the second and the third tensor factors.
Both sides of this equality are linear maps from
$H^{j_6j_4}_{j_0}\otimes_{\mathbb{F}} H^{j_5,j_3}_{j_6}\otimes_{\mathbb{F}} H^{j_1,j_2}_{j_5}$
to $H^{j_1,j_7}_{j_0}\otimes_{\mathbb{F}} H^{j_2,j_8}_{j_7}\otimes_{\mathbb{F}} H^{j_3,j_4}_{j_8}$.
\end{proposition}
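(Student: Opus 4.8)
The plan is to read the two sides of the equality~(\ref{Ass4}) as the two legs of the pentagon diagram for the four simple objects $\mathbf{e}_{i_1},\mathbf{e}_{i_2},\mathbf{e}_{i_3},\mathbf{e}_{i_4}$, to apply the functors $\mathrm{Hom}_{\mathcal C}(\mathbf{e}_{j_0},-)$, $j_0=1,2$, to both legs, and to decompose the resulting morphism spaces by iterating Lemmas~\ref{2.4} and~\ref{2.5}; the $6$-$j$ symbols then emerge automatically, and the identity between the induced linear maps is precisely the Biedenharn--Elliot identity. As a first step I would check that, once the completeness relations $\sum_{j,k}X^{\mathbf{c}_{i_2i_3}}_{j,k}Y^{\mathbf{c}_{i_2i_3}}_{j,k}=E_{\mathbf{c}_{i_2i_3}}$ (and the analogous relations for $\mathbf{c}_{i_1i_2}$ and $\mathbf{c}_{i_3i_4}$) are inserted, the left-hand side of~(\ref{Ass4}) becomes exactly the composite
$$(\mathrm{id}_{\mathbf{e}_{i_1}}\ot a_{i_2,i_3,i_4})\circ a_{\mathbf{e}_{i_1},\,\mathbf{e}_{i_2}\ot\mathbf{e}_{i_3},\,\mathbf{e}_{i_4}}\circ(a_{i_1,i_2,i_3}\ot\mathrm{id}_{\mathbf{e}_{i_4}})$$
while the right-hand side becomes $a_{\mathbf{e}_{i_1},\,\mathbf{e}_{i_2},\,\mathbf{e}_{i_3}\ot\mathbf{e}_{i_4}}\circ a_{\mathbf{e}_{i_1}\ot\mathbf{e}_{i_2},\,\mathbf{e}_{i_3},\,\mathbf{e}_{i_4}}$, the matrices $X^{\mathbf{m}}_{i,k}$, $Y^{\mathbf{m}}_{i,k}$ being present only to express the associators whose middle or outer argument is one of the non-simple objects $\mathbf{c}_{i_2i_3},\mathbf{c}_{i_1i_2},\mathbf{c}_{i_3i_4}$ through the associators $a_{i,j,l}$ on simple objects, exactly as in \cite{ChenZhang}. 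Thus, for a fixed tuple $(i_1,i_2,i_3,i_4)$, the equality~(\ref{Ass4}) is nothing but the assertion that the pentagon diagram commutes for $\mathbf{e}_{i_1},\mathbf{e}_{i_2},\mathbf{e}_{i_3},\mathbf{e}_{i_4}$.

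Next I would set $\mathbf{e}_{i_\nu}=\mathbf{e}_{j_\nu}$ and, since every object of $\mathcal C$ is a direct sum of copies of $\mathbf{e}_1$ and $\mathbf{e}_2$, test this morphism equality against $\mathrm{Hom}_{\mathcal C}(\mathbf{e}_{j_0},-)$ for $j_0=1,2$. Applying Lemma~\ref{2.4} twice identifies $\mathrm{Hom}_{\mathcal C}(\mathbf{e}_{j_0},((\mathbf{e}_{j_1}\ot\mathbf{e}_{j_2})\ot\mathbf{e}_{j_3})\ot\mathbf{e}_{j_4})$ with $\bigoplus_{j_5,j_6}H^{j_6j_4}_{j_0}\ot_{\mathbb F}H^{j_5j_3}_{j_6}\ot_{\mathbb F}H^{j_1j_2}_{j_5}$, and applying Lemma~\ref{2.5} twice identifies $\mathrm{Hom}_{\mathcal C}(\mathbf{e}_{j_0},\mathbf{e}_{j_1}\ot(\mathbf{e}_{j_2}\ot(\mathbf{e}_{j_3}\ot\mathbf{e}_{j_4})))$ with $\bigoplus_{j_7,j_8}H^{j_1j_7}_{j_0}\ot_{\mathbb F}H^{j_2j_8}_{j_7}\ot_{\mathbb F}H^{j_3j_4}_{j_8}$; the objects at the intermediate vertices of the pentagon are reassociations of these fourfold products, and their $\mathrm{Hom}$-spaces decompose in the same fashion, into threefold tensor products of multiplicity spaces. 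By the very definition of the $6$-$j$ symbols as the component maps of the $\phi^{l,s}_{i,j}$ with respect to the standard bases, each of the five maps of the pentagon, restricted to the pertinent $\mathrm{Hom}$-space decompositions, is a block matrix whose nonzero blocks are $6$-$j$ symbols, each acting on two adjacent tensor factors and tensored with an identity map $\mathrm{id}^{ab}_c$ on the third. Reading off the three-arrow leg then produces the left-hand side of the displayed identity -- the summation over $j$ there recording the simple summand of $\mathbf{e}_{j_2}\ot\mathbf{e}_{j_3}$ through which the middle associator passes -- and reading off the two-arrow leg produces the right-hand side, the only delicate point being the ordering of tensor factors along the two-arrow leg, which is what accounts for the transposition $P_{23}$ in the Biedenharn--Elliot identity.

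The genuinely laborious part, and where I expect the main obstacle, is the bookkeeping just sketched: one has to record the ``general-object'' forms of Lemmas~\ref{2.4} and~\ref{2.5} (with $\mathbf{e}_i\ot\mathbf{e}_j$ replaced by an arbitrary object of $\mathcal C$, split into isotypic components), check that applying the functor $(-)\ot\mathbf{e}_l$ or $\mathbf{e}_i\ot(-)$ to a morphism and then decomposing yields precisely $f\ot\mathrm{id}$ or $\mathrm{id}\ot g$ on the corresponding multiplicity factors, and verify that the index conventions $u$ and $v$ defining the standard bases (given just before the statement) are compatible with the iterated identifications, so that every $6$-$j$ symbol lands in the correct tensor slot in the correct order. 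Granting these routine compatibilities, reading off matrix entries gives, for each choice of $(j_0,j_5,j_6,j_7,j_8)$, exactly one instance of the Biedenharn--Elliot identity, and letting $(i_1,i_2,i_3,i_4)=(j_1,j_2,j_3,j_4)$ and $j_0,\dots,j_8$ run over all values yields the asserted equivalence. Finally, by Lemma~\ref{2.3} the equality~(\ref{Ass4}) holds automatically unless $i_1=i_2=i_3=i_4=2$, so in practice the equivalence reduces to that of~(\ref{a222}) with the instances of the Biedenharn--Elliot identity for which $j_1=j_2=j_3=j_4=2$, the remaining instances (those with some $j_\nu=1$, $1\<\nu\<4$) being forced by the normalization $a_{1,j,l}=a_{j,1,l}=a_{j,l,1}=E$.
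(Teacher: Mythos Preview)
Your proposal is correct and follows essentially the same route as the paper's own proof: interpret (\ref{Ass4}) as the pentagon for the simple objects $\mathbf{e}_{j_1},\dots,\mathbf{e}_{j_4}$, use semisimplicity to pass to the induced diagram on $\mathrm{Hom}_{\mathcal C}(\mathbf{e}_{j_0},-)$, and then invoke Lemmas~\ref{2.4} and~\ref{2.5} together with the defining identification of the $6$-$j$ symbols to obtain the Biedenharn--Elliot identity. The paper's argument is considerably terser than yours (it essentially states these three steps without the bookkeeping you outline), but the logical content is the same.
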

\begin{proof}
The equation (\ref{Ass4}) means that the diagram
$$\begin{tikzpicture}[scale=1.0]
\path (0,0) node(a1) {$((\mathbf{e}_{j_1}\otimes \mathbf{e}_{j_2})\otimes \mathbf{e}_{j_3})\otimes \mathbf{e}_{j_4}$}
 (6.8,0) node(b1) {$(\mathbf{e}_{j_1}\otimes \mathbf{e}_{j_2})\otimes (\mathbf{e}_{j_3}\otimes \mathbf{e}_{j_4})$} ;
\path  (0,-1.3) node(a2) {$  (\mathbf{e}_{j_1}\otimes (\mathbf{e}_{j_2}\otimes \mathbf{e}_{j_3}))\otimes \mathbf{e}_{j_4}$} ;
\path(0,-2.6)node(a3){$\mathbf{e}_{j_1}\otimes ((\mathbf{e}_{j_2}\otimes \mathbf{e}_{j_3})\otimes \mathbf{e}_{j_4})$}
(6.8,-2.6) node(b2) {$\mathbf{e}_{j_1}\otimes (\mathbf{e}_{j_2}\otimes (\mathbf{e}_{j_3}\otimes \mathbf{e}_{j_4}))$};
\path (-1.1, -0.6) node(.) {$a_{j_1, j_2, j_3}\ot{\rm id}_{j_4}$}
(-1.1,-1.95)node(.){$a_{\mathbf{e}_{j_1},\mathbf{c}_{j_2j_3}, \mathbf{e}_{j_4}}$}
(3.3, 0.2) node(.) {$a_{\mathbf{c}_{j_1j_2}, \mathbf{e}_{j_3}, \mathbf{e}_{j_4}}$}
(3.3, -2.85) node(.) {${\rm id}_{j_1}\ot a_{\mathbf{e}_{j_2},\mathbf{e}_{j_3},\mathbf{e}_{j_4}}$}
 (5.8, -1.3) node(.) {$a_{\mathbf{e}_{j_1},\mathbf{e}_{j_2},\mathbf{c}_{j_3j_4}}$};
\draw[->] (a1) --(b1);
\draw[->] (b1) --(b2);
\draw[->] (a3) --(b2);
\draw[->] (a1) --(a2);
\draw[->](a2)--(a3);
\end{tikzpicture}$$
commutes for any $j_1, j_2, j_3, j_4\in\{1, 2\}$, where ${\rm id}_j={\rm id}_{\mathbf{e}_j}=e_j$.
Since $\mathcal C$ is a semisimple category,
the above commutative diagram is equivalent to
the following commutative diagram
$$\begin{tikzpicture}[scale=1.0]
\path (0,0) node(a1) {${\rm Hom}_{\mathcal C}
(\mathbf{e}_{j_0},((\mathbf{e}_{j_1}\ot\mathbf{e}_{j_2})\ot\mathbf{e}_{j_3})\ot\mathbf{e}_{j_4})$}
(7,0) node(b1) {${\rm Hom}_{\mathcal C}
	(\mathbf{e}_{j_0}, (\mathbf{e}_{j_1}\ot\mathbf{e}_{j_2})\ot(\mathbf{e}_{j_3}\ot\mathbf{e}_{j_4}))$} ;
\path  (0,-1.3) node(a2) {${\rm Hom}_{\mathcal C}
	(\mathbf{e}_{j_0},  (\mathbf{e}_{j_1}\ot(\mathbf{e}_{j_2}\ot\mathbf{e}_{j_3}))\ot\mathbf{e}_{j_4})$} ;
\path(0,-2.6)node(a3){${\rm Hom}_{\mathcal C}
	(\mathbf{e}_{j_0}, \mathbf{e}_{j_1}\ot((\mathbf{e}_{j_2}\ot\mathbf{e}_{j_3})\ot\mathbf{e}_{j_4}))$}
(7,-2.6) node(b2) {${\rm Hom}_{\mathcal C}
	(\mathbf{e}_{j_0}, \mathbf{e}_{j_1}\ot(\mathbf{e}_{j_2}\ot(\mathbf{e}_{j_3}\ot\mathbf{e}_{j_4})))$};
\path(3.5, 0.2) node(.) {$a_*$}
 (-0.9, -0.6) node(.) {$(a\ot e_{j_4})_*$}
(6.65, -1.3) node(.) {$a_*$}
(-0.4,-1.95)node(.){$a_*$}
(3.5, -2.85) node(.) {$(e_1\ot a)_*$};
\draw[->] (a1) --(b1);
\draw[->] (b1) --(b2);
\draw[->] (a3) --(b2);
\draw[->] (a1) --(a2);
\draw[->](a2)--(a3);
\end{tikzpicture}$$
where $j_0\in \{1,2\}$, and we omit the subscripts of $a$.
Then by Lemmas \ref{2.4}-\ref{2.5} and the above identification,
we conclude that the last commutative diagram is equivalent to the Biedenharn-Elliot identity.
\end{proof}

By Proposition \ref{2.6}, the equation (\ref{a222}) is equivalent to
\begin{eqnarray}
\nonumber&\sum\limits_{j=1}^2({\rm id}^{2j_7}_{j_0}\ot_{\mathbb{F}} \left\{\begin{array}{ccc}
                                                   2& 2 & j \\
                                                   2 & j_7 & j_8
                                                 \end{array}\right\})
 (\left\{\begin{array}{ccc}
 2& j & j_6 \\
 2 & j_0 & j_7
 \end{array}\right\}\ot_{\mathbb{F}}{\rm id}^{22}_{j} )({\rm id}^{j_62}_{j_0}\ot_{\mathbb{F}} \left\{\begin{array}{ccc}
                                                   2& 2 & j_5 \\
                                                   2 & j_6 & j
                                                 \end{array}\right\})\\
=&(\left\{\begin{array}{ccc}
 2& 2 & j_5 \\
 j_8 & j_0 & j_7
 \end{array}\right\}\ot_{\mathbb{F}}{\rm id}^{22}_{j_8} )P_{23}(\left\{\begin{array}{ccc}
 j_5& 2 & j_6 \\
 2 & j_0 & j_8
 \end{array}\right\}\ot_{\mathbb{F}}{\rm id}^{22}_{j_5}), \hspace{3.5cm}\label{a222+1}
\end{eqnarray}
where $j_0, j_5, j_6, j_7, j_8\in\{1, 2\}$.
\section{\bf Categorization of $\mathbb{Z}_+$-ring}
In this section, we study the categorization of $\mathbb{Z}_+$-ring $R$ as stated in last section.

Let $a_{2,2,2}=\left(\begin{array}{cc}
\Lambda_1e_1&0\\
0&\Lambda_2e_2\\
\end{array}\right)$ for some  matrices $\Lambda_1\in M_{mn}(\mathbb F)$, $\Lambda_2\in M_{m+n^2}(\mathbb F)$, and  $\L_2=\left(\begin{array}{cc}
\L_{22}&\L_{23}\\
\L_{32}&\L_{33}\\
\end{array}\right)$
with $\L_{22}\in M_m(\mathbb F)$ and $\L_{33}\in M_{n^2}(\mathbb F)$.
Since $a_{1,i,j}=a_{i,1,j}=a_{i,j,1}=E_{\mathbf{e}_i\ot\mathbf{e}_j}$,
 with respect to the standard bases $B$ and $C$, one can regard the above $6$-$j$ symbols as matrices as follow:
$$\begin{array}{l}
\vspace{0.2cm}
\left\{\begin{array}{ccc}
                                                                       1 & 1 & 1 \\
                                                                       1 & 1 & 1
                                                                     \end{array}\right
 \}=\left\{\begin{array}{ccc}
                                                                       1 & 1 & 1 \\
                                                                       2 & 2 & 2
                                                                     \end{array}\right\}=\left\{\begin{array}{ccc}
                                                                       1 & 2 & 2 \\
                                                                       1 & 2 & 2
                                                                     \end{array}\right\}=\left\{\begin{array}{ccc}
                                                                       2 & 1 & 2 \\
                                                                       1 & 2 & 1
                                                                     \end{array}\right\}=I_1,\\
  \vspace{0.2cm}
\left\{\begin{array}{ccc}
                                                                       1 & 2 & 2 \\
                                                                       2 & 1 & 1
                                                                     \end{array}\right
 \}=\left\{\begin{array}{ccc}
                                                                       2 & 1 & 2 \\
                                                                       2 & 1 & 2
                                                                     \end{array}\right\}=\left\{\begin{array}{ccc}
                                                                       2 & 2 & 1 \\
                                                                       1 & 1 & 2
                                                                     \end{array}\right\}=I_m,\\
 \vspace{0.2cm}
\left\{\begin{array}{ccc}
                                                                       1 & 2 & 2 \\
                                                                       2 & 2 & 2
                                                                     \end{array}\right
 \}=\left\{\begin{array}{ccc}
                                                                       2 & 1 & 2 \\
                                                                       2 & 2 & 2
                                                                     \end{array}\right\}=\left\{\begin{array}{ccc}
                                                                       2 & 2 & 2 \\
                                                                       1 & 2 & 2
                                                                     \end{array}\right\}=I_n,\\
 \vspace{0.2cm}
\left\{\begin{array}{ccc}
      2 & 2 & 2 \\
      2 & 1 & 2
    \end{array}\right
 \}=\L_1, \
\left\{\begin{array}{ccc}
      2 & 2 & 1 \\
      2 & 2 & 1
    \end{array}\right
 \}=\L_{22}, \
\left\{\begin{array}{ccc}
      2 & 2 & 2 \\
      2 & 2 & 1
    \end{array}\right
 \}=\L_{23},\\
 \vspace{0.2cm}
\left\{\begin{array}{ccc}
      2 & 2 & 1 \\
      2 & 2 & 2
    \end{array}\right
 \}=\L_{32}, \
\left\{\begin{array}{ccc}
      2 & 2 & 2 \\
      2 & 2 & 2
    \end{array}\right
 \}=\L_{33}\\
\end{array}$$
and the rest
$\left\{\begin{array}{ccc}
i & j & k \\
l & m & n
\end{array}\right\}=0$.

In the following, we regard each component map of  Equation (\ref{a222+1}) as  a matrix  with respect to the standard bases.
Since $P_{23}$ is a flip map from $H^{j_{5}j_{8}}_{j_0}\otimes H^{j_{3}j_{4}}_{j_8}\otimes H^{j_{1}j_{2}}_{j_5} \longrightarrow H^{j_{5}j_{8}}_{j_0}\otimes H^{j_{1}j_{2}}_{j_5}\otimes H^{j_{3}j_{4}}_{j_8}$, $P_{23}=I_{a}\otimes_{\mathbb{F}} I_{[c,b]}$, where $a=c_{j_5j_8j_0}$, $b=c_{j_3j_4j_8}$, $c=c_{j_1j_2j_5}$, $I^{[c,b]}_{i,j}=\e^T_{c,i}\e_{b,j}\in M_{c\times b}(\mathbb F)$ and
$$I_{[c,b]}=\left(\begin{array}{cccc}
I_{1,1}^{[c,b]}&I_{2,1}^{[c,b]}&\cdots&I_{c,1}^{[c,b]}\\
I_{1,2}^{[c,b]}&I_{2,2}^{[c,b]}&\cdots&I_{c,2}^{[c,b]}\\
\cdots&\cdots&\cdots&\cdots\\
I_{1,b}^{[c,b]}&I_{2,b}^{[c,b]}&\cdots&I_{c,b}^{[c,b]}\\
\end{array}\right)\in M_{bc\times bc}(\mathbb F).$$
In particularly, if $b=c=n$ then $I_{[c,b]}=I_{[n]}$.

 In order to make $\mathcal C$ into a tensor category, we need to investigate when the equation (\ref{a222+1})  holds.
We will consider it in three cases: (1) $m>0$ and $n=0$, (2) $m=0$ and $n>0$,
(3) $m>0$ and $n>0$, respectively.

\subsection{$m>0$ and $n=0$}
 In this case, $\mathbf{e}_2\ot\mathbf{e}_2=\mathbf{c}_{22}=(m,0)=m\mathbf{e}_1$
and $\mathbf{e}_2\ot\mathbf{e}_2\ot\mathbf{e}_2=(0,m)=m\mathbf{e}_2$. Moreover, $\left\{\begin{array}{ccc}
                                                                       1 & 2 & 2 \\
                                                                       2 & 2 & 2
                                                                     \end{array}
 \right\}=\left\{\begin{array}{ccc}
                                                                       2 & 1 & 2 \\
                                                                       2 & 2 & 2
                                                                     \end{array}\right\}=\left\{\begin{array}{ccc}
                                                                       2 & 2 & 2 \\
                                                                       1 & 2 & 2
                                                                     \end{array}\right\}=\left\{\begin{array}{ccc}
      2 & 2 & 2 \\
      2 & 1 & 2
    \end{array}\right
 \}=\left\{\begin{array}{ccc}
      2 & 2 & 2 \\
      2 & 2 & 2
    \end{array}\right
 \}=\left\{\begin{array}{ccc}
      2 & 2 & 2 \\
      2 & 2 & 1
    \end{array}
 \right\}=\left\{\begin{array}{ccc}
      2 & 2 & 1 \\
      2 & 2 & 2
    \end{array}\right
 \}=0$,
$a_{2,2,2}=\L_{22}e_2$ and $\L_{22}=\left\{\begin{array}{ccc}
               2 & 2 & 1 \\
               2 & 2 & 1
             \end{array}\right
\}$.
 Hence the equation (\ref{a222+1}) is equivalent to
$$I_m\ot_{\mathbb{F}} \L^2_{22}=P_{23}.$$

 Let $a_{222}=(\a_{ij})e_2$ and  $\b_{ij}=\sum_{k=1}^m\a_{ik}\a_{kj}$ for all
$1\<i,j\<m$. That is, $(\b_{ij})=\Lambda^2_{22}\in M_m(\mathbb F)$.
Thus, the above equation becomes
$$
  \left(\begin{array}{cccc}
    \b_{11}I_m & \b_{12}I_m & \cdots & \b_{1m}I_m \\
    \b_{21}I_m & \b_{22}I_m & \cdots & \b_{2m}I_m \\
    \cdots & \cdots & \cdots & \cdots \\
    \b_{m1}I_m & \b_{m2}I_m & \cdots & \b_{mm}I_m \\
  \end{array}\right) =\left( \begin{array}{cccc}
I_{1,1}&I_{2,1}&\cdots&I_{m,1}\\
I_{1,2}&I_{2,2}&\cdots&I_{m,2}\\
\cdots&\cdots&\cdots&\cdots\\
I_{1,m}&I_{2,m}&\cdots&I_{m,m}\\
\end{array}\right), $$
where $I_{i,j}=I^{[m]}_{i,j}$. If $m>1$, it is easy to see that there is no invertible matrix $(\b_{ij})$ that
satisfies the above equation.
Now assume $m=1$. Then $a_{2,2,2}=\a e_2$ for some $\a\in\mathbb{F}^{\times}$.
In this case, it follows from the above computation that the equation (\ref{a222+1}) holds
if and only if $\a^2=1$, which is equivalent to that $a_{2,2,2}=e_2$ or $a_{2,2,2}=-e_2$.

Let $a_{2,2,2}=e_2$. Then by Lemmas \ref{2.2}, \ref{2.3} and the above discussion, the family
$\{a_{i,j,l}\in M_{\mathbf{e}_i\ot\mathbf{e}_j\ot\mathbf{e}_l}(A)|1\<i,j,l\<2\}$
satisfies the conditions Ass (1)-(4). For any $\mathbf{m}=(m_1,m_2), \mathbf{s}=(s_1,s_2), \mathbf{t}=(t_1,t_2)\in{\rm Ob}(\mathcal C)$,
define a matrix $a_{\mathbf{m}, \mathbf{s}, \mathbf{t}}\in M_{\mathbf{m}\ot\mathbf{s}\ot\mathbf{t}}(A)$ by
$a_{\mathbf{m}, \mathbf{s}, \mathbf{t}}=E_{\mathbf{0}}=0$ when $\mathbf{m}\ot\mathbf{s}\ot\mathbf{t}=\mathbf{0}$,
and
$$\begin{array}{l}
\hspace{0.5cm}a_{\mathbf{m}, \mathbf{s}, \mathbf{t}}\\
=\!\sum\limits_{i,j,l=1}^2
\!\sum\limits_{1\<k_1\<m_i}\!\sum\limits_{1\<k_2\<s_j}\!\sum\limits_{1\<k_3\<t_l}
\!(X^{\mathbf m}_{i,k_1}\ot (X^{\mathbf s}_{j,k_2}\ot  X^{\mathbf t}_{l,k_3}))a_{i,j,l}
((Y^{\mathbf m}_{i,k_1}\ot Y^{\mathbf s}_{j,k_2})\ot Y^{\mathbf t}_{l,k_3})\\
\end{array}$$
when $\mathbf{m}\ot\mathbf{s}\ot\mathbf{t}\neq\mathbf{0}$.
Thus, by \cite[Theorem 3.16]{ChenZhang}, we have the following proposition.

\begin{lemma}\label{2.7}
Assume $m=1$. Then with the tensor products $(\ref{T})$ and $(\ref{t})$,
$\mathcal C$ is a tensor category over $\mathbb F$, where $\mathbf{e}_1$ is the unit object,
the associativity constraint $a$ is induced by $\{a_{i,j,l}|1\<i,j,l\<2\}$ as above,
the left and right unit constraints are the identities. Denote the tensor category by $\mathcal{C}_+$.
\end{lemma}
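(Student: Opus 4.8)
The plan is to verify the hypotheses of \cite[Theorem 3.16]{ChenZhang}, which asserts that the data $(\mathcal C,\otimes, a)$ assembled from a family $\{a_{i,j,l}\}$ satisfying conditions Ass(1)--(4) yields a tensor category. Most of the structural work has already been done above: the tensor product on objects via \eqref{T} and on morphisms via \eqref{t} is defined, $\mathbf e_1$ is a two-sided unit for $\otimes$, and we have chosen $a_{2,2,2}=e_2$ with all the mixed associators $a_{1,i,j}=a_{i,1,j}=a_{i,j,1}=E_{\mathbf e_i\otimes\mathbf e_j}$. So the concrete task reduces to checking that this particular family $\{a_{i,j,l}\mid 1\le i,j,l\le 2\}$ satisfies Ass(1)--(4), and then invoking the cited theorem to get the global associator $a$ induced by the formula displayed before the lemma.

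First I would dispose of Ass(1)--(3): these are the naturality/compatibility conditions involving at least one index equal to $1$ (unit-type normalizations), and they hold by \leref{2.2} together with \leref{2.3} — indeed \leref{2.2} gives exactly the naturality square $(x\otimes(y\otimes z))a_{i,j,l}=a_{i',j',l'}((x\otimes y)\otimes z)$ for the elementary morphisms $x,y,z$, and \leref{2.3} handles the pentagon-type identity \eqref{Ass4} whenever one of $i_1,i_2,i_3,i_4$ equals $1$. The substance is Ass(4) in the remaining case $i_1=i_2=i_3=i_4=2$, i.e. equation \eqref{a222}. By \prref{2.6} this is equivalent to the Biedenharn--Elliot identity \eqref{a222+1}, and since $n=0$ here almost every $6$-$j$ symbol vanishes (as listed in the $m>0,n=0$ subsection): \eqref{a222+1} collapses to the single scalar equation $I_m\otimes_{\mathbb F}\Lambda_{22}^2=P_{23}$, which with $m=1$ and $a_{2,2,2}=\alpha e_2$ becomes $\alpha^2=1$. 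Taking $\alpha=1$, i.e. $a_{2,2,2}=e_2$, this holds, so Ass(4) holds in all cases.

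Having checked Ass(1)--(4), I would then apply \cite[Theorem 3.16]{ChenZhang} verbatim: it produces from $\{a_{i,j,l}\}$ a natural isomorphism $a_{\mathbf m,\mathbf s,\mathbf t}\colon(\mathbf m\otimes\mathbf s)\otimes\mathbf t\to\mathbf m\otimes(\mathbf s\otimes\mathbf t)$ given by the summation formula displayed just before the lemma (and $0$ when $\mathbf m\otimes\mathbf s\otimes\mathbf t=\mathbf 0$), satisfying the pentagon axiom, with $\mathbf e_1$ as unit object and the unit constraints equal to identities. Naming this tensor category $\mathcal C_+$ completes the proof. I do not expect any genuine obstacle here — the only delicate point is the bookkeeping that reduces \eqref{a222+1} to $\alpha^2=1$ in the $n=0$ case, and that reduction was already carried out in the subsection preceding the lemma, so the proof is essentially an assembly of \leref{2.2}, \leref{2.3}, \prref{2.6}, the scalar computation, and \cite[Theorem 3.16]{ChenZhang}.
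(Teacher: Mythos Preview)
Your proposal is correct and matches the paper's approach essentially verbatim: the paper likewise invokes \leref{2.2}, \leref{2.3}, and the reduction of \eqref{a222+1} to $\alpha^2=1$ (via \prref{2.6}) to verify Ass(1)--(4), then applies \cite[Theorem~3.16]{ChenZhang} to the family $\{a_{i,j,l}\}$ with $a_{2,2,2}=e_2$.
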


Let $a'_{2,2,2}=-e_2$ and $a'_{1,i,j}=a'_{i,1,j}=a'_{i,j,1}=E_{\mathbf{e}_i\ot\mathbf{e}_j}\in M_{\mathbf{e}_i\ot\mathbf{e}_j}(A)$
for any $1\<i,j\<2$. Then similarly, we have the following proposition.

\begin{lemma}\label{2.8}
Assume $m=1$. Then with the tensor products $(\ref{T})$ and $(\ref{t})$,
$\mathcal C$ is a tensor category over $\mathbb F$, where $\mathbf{e}_1$ is the unit object,
the associativity constraint $a'$ is induced by $\{a'_{i,j,l}|1\<i,j,l\<2\}$ similarly as above,
the left and right unit constraints are the identities. Denote the tensor category by $\mathcal{C}_-$.
\end{lemma}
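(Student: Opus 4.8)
The plan is to mirror the argument that produced $\mathcal{C}_+$ in Lemma~\ref{2.7}, simply tracking the single sign change. First I would observe that the family $\{a'_{i,j,l}\mid 1\<i,j,l\<2\}$ differs from $\{a_{i,j,l}\}$ only in the slot $(2,2,2)$, where $a'_{2,2,2}=-e_2$ instead of $e_2$. Since $m=1$ and $n=0$, the computation preceding Lemma~\ref{2.7} shows that the equation~(\ref{a222+1}) (equivalently Ass(4) for $i_1=i_2=i_3=i_4=2$) reduces to the scalar condition $\alpha^2=1$ on $a_{2,2,2}=\alpha e_2$; this is satisfied by $\alpha=-1$ just as well as by $\alpha=1$. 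Thus Ass(4) holds for all choices of indices: the cases where some $i_k=1$ are covered by Lemma~\ref{2.3}, and the all-$2$ case is the scalar identity just noted.

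Next I would check the remaining conditions Ass(1)--(3) of \cite[Theorem 3.16]{ChenZhang}. These are exactly the statements recorded in Lemmas~\ref{2.2} and~\ref{2.3} together with the normalization $a'_{1,i,j}=a'_{i,1,j}=a'_{i,j,1}=E_{\mathbf{e}_i\ot\mathbf{e}_j}$. Lemma~\ref{2.2} is a formal consequence of \cite[Corollary 3.6]{ChenZhang} and only uses that $a_{i,j,l}$ is a scalar multiple of $E_{\mathbf{e}_i\ot\mathbf{e}_j\ot\mathbf{e}_l}$ in the diagonal case; this remains true with the scalar $-1$, so the same proof gives the naturality identity $(x\ot(y\ot z))a'_{i,j,l}=a'_{i',j',l'}((x\ot y)\ot z)$. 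The invertibility hypothesis on $a'_{2,2,2}$ is clear since $-e_2$ is a unit in $M_{\mathbf{e}_2\ot\mathbf{e}_2\ot\mathbf{e}_2}(A)=\mathbb{F}e_2$. Hence all hypotheses of \cite[Theorem 3.16]{ChenZhang} are met, and that theorem upgrades $\mathcal{C}$ (with the tensor products~(\ref{T}) and~(\ref{t})) to a tensor category whose associativity constraint $a'$ is induced by $\{a'_{i,j,l}\}$ via the formula displayed before Lemma~\ref{2.7}, with $\mathbf{e}_1$ the unit object and the unit constraints the identities. We name this category $\mathcal{C}_-$.

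I do not expect any real obstacle here; the proof is genuinely a one-line invocation of the preceding discussion, since the pentagon axiom was already shown to be equivalent (via Proposition~\ref{2.6}) to the Biedenharn--Elliot identity~(\ref{a222+1}), which in the present case $m=1$, $n=0$ degenerates to $\alpha^2=1$. The only point worth stating explicitly in the write-up is that the verification of Ass(1)--(4) goes through \emph{verbatim} after replacing $a_{2,2,2}=e_2$ by $a'_{2,2,2}=-e_2$, because every step used only that the relevant entry is an invertible scalar squaring to $1$. Accordingly the proof can simply read: ``Similar to the proof of Lemma~\ref{2.7}, using $(-1)^2=1$ in place of $1^2=1$ when verifying Ass(4) for $i_1=i_2=i_3=i_4=2$.''
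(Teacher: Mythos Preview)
Your proposal is correct and matches the paper's approach: the paper simply states Lemma~\ref{2.8} with the preamble ``Then similarly, we have the following proposition,'' giving no further argument. Your write-up spells out exactly what ``similarly'' means---that replacing $a_{2,2,2}=e_2$ by $a'_{2,2,2}=-e_2$ still satisfies $\alpha^2=1$, so Ass(1)--(4) hold and \cite[Theorem 3.16]{ChenZhang} applies---which is precisely the intended one-line invocation.
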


Note that $\mathcal{C}_+=\mathcal{C}_-$ in case char$(\mathbb F)=2$. In case char$(\mathbb F)\neq 2$,
we have the following lemma.

\begin{lemma}\label{2.9}
If char$(\mathbb F)\neq 2$, then $\mathcal{C}_+$ and $\mathcal{C}_-$ are not tensor equivalent.
\end{lemma}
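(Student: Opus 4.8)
The plan is to show $\mathcal{C}_+ \not\simeq \mathcal{C}_-$ as tensor categories by exhibiting a tensor-categorical invariant that distinguishes them. Since both categories have the same underlying semisimple category with simple objects $\mathbf{e}_1,\mathbf{e}_2$ and the same fusion rule $\mathbf{e}_2\otimes\mathbf{e}_2 \cong \mathbf{e}_1$ (because $m=1$, $n=0$), a tensor equivalence $F\colon\mathcal{C}_+\to\mathcal{C}_-$ would necessarily send $\mathbf{e}_1\mapsto\mathbf{e}_1$ and $\mathbf{e}_2\mapsto\mathbf{e}_2$ (the unit goes to the unit, and the only other simple goes to the only other simple). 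The key point is that the associativity constraints differ precisely in the component $a_{2,2,2}=e_2$ versus $a'_{2,2,2}=-e_2$, and this difference is not a coboundary.

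The cleanest way I would carry this out is via the cohomological classification of such categories. Because all relevant Hom-spaces $H^{ij}_k$ are at most one-dimensional in this case, the associativity data reduces to a single scalar, and the pentagon axiom forces that scalar to square to $1$; thus we are looking at the group $H^3(\mathbb{Z}/2,\mathbb{F}^\times)\cong\mathbb{Z}/2$ when $\mathrm{char}(\mathbb{F})\neq 2$ (here $\mathcal{C}_\pm$ are the two pointed categories $\mathrm{Vec}_{\mathbb{Z}/2}^{\omega_\pm}$). Two such categories are tensor equivalent if and only if the associated $3$-cocycles differ by the action of an automorphism of $\mathbb{Z}/2$ (which is trivial) together with a coboundary; since the two classes are the nontrivial pair in $\mathbb{Z}/2$, they are inequivalent. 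Concretely, I would phrase this without invoking group cohomology by hand: suppose $(F,\xi)$ is a tensor functor with natural isomorphism $\xi_{\mathbf{x},\mathbf{y}}\colon F\mathbf{x}\otimes F\mathbf{y}\to F(\mathbf{x}\otimes\mathbf{y})$; restricting to $\mathbf{x}=\mathbf{y}=\mathbf{e}_2$ gives a scalar $\xi:=\xi_{\mathbf{e}_2,\mathbf{e}_2}\in\mathbb{F}^\times$ (using $\mathrm{End}(\mathbf{e}_1)=\mathbb{F}$), and likewise $F$ acts on $\mathrm{End}(\mathbf{e}_2)$ as the identity. Writing out the hexagon/compatibility diagram relating $a_{2,2,2}$ of $\mathcal{C}_+$ to $a'_{2,2,2}$ of $\mathcal{C}_-$ through $\xi$, one obtains an identity of scalars of the form $a_{2,2,2}=\lambda \cdot a'_{2,2,2}$ where $\lambda$ is a product of values of $\xi$ each appearing once with a $+$ and once with a $-$ exponent, hence $\lambda=1$; this forces $e_2 = -e_2$, i.e. $2=0$ in $\mathbb{F}$, contradicting $\mathrm{char}(\mathbb{F})\neq 2$.

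In more detail, the steps are: (i) reduce a hypothetical tensor equivalence to a tensor \emph{autoequivalence}-type situation fixing the simple objects, so that the functor on objects and on morphism spaces is as constrained as above; (ii) extract from the tensor-functor coherence diagram (the square expressing $F(a) \circ (\text{stuff with }\xi) = (\text{stuff with }\xi)\circ a$) applied to the triple $(\mathbf{e}_2,\mathbf{e}_2,\mathbf{e}_2)$ the scalar equation; (iii) observe that every $\xi$-factor entering that equation is $\xi_{\mathbf{e}_2,\mathbf{e}_2}$ appearing an equal number of times in numerator and denominator (because all three objects are $\mathbf{e}_2$ and all intermediate tensor products collapse to $\mathbf{e}_1$ or $\mathbf{e}_2$ with one-dimensional multiplicity spaces), so the $\xi$-factors cancel; (iv) conclude $1 = -1$ in $\mathbb{F}$, a contradiction. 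Alternatively, and perhaps more in keeping with the matrix language of this paper, one can argue via the $6$-$j$ symbol $\left\{\begin{smallmatrix}2&2&1\\2&2&1\end{smallmatrix}\right\}=\Lambda_{22}$: a tensor equivalence induces, on each one-dimensional multiplicity space, a rescaling by a nonzero scalar (a "gauge transformation" of the $6$-$j$ symbols), and the scalar by which $\left\{\begin{smallmatrix}2&2&1\\2&2&1\end{smallmatrix}\right\}$ is multiplied under such a transformation is, after tracking all the multiplicity-space rescalings through Lemmas \ref{2.4}--\ref{2.5}, forced to be $1$; since the value is $1$ for $\mathcal{C}_+$ and $-1$ for $\mathcal{C}_-$, they cannot be equivalent.

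The main obstacle is step (iii): one must verify carefully that the gauge scalars attached to the various multiplicity spaces genuinely cancel, i.e. that no net rescaling of $a_{2,2,2}$ is possible. This is exactly the statement that the relevant class in $H^3(\mathbb{Z}/2,\mathbb{F}^\times)$ is nontrivial, and the bookkeeping is where all the work lies; everything else is formal. I would organize this bookkeeping using the identifications set up before Proposition \ref{2.6}, so that a tensor equivalence corresponds to choosing invertible scalars on each $H^{ij}_k$ and the induced action on the $6$-$j$ symbol $\left\{\begin{smallmatrix}2&2&1\\2&2&1\end{smallmatrix}\right\}$ is visibly a ratio in which each chosen scalar occurs with total exponent zero.
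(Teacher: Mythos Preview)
Your approach is essentially the same as the paper's: assume a tensor equivalence exists, write out the tensor-functor coherence condition at the triple $(\mathbf{e}_2,\mathbf{e}_2,\mathbf{e}_2)$, and derive the scalar identity $1=-1$. The paper does exactly this, invoking \cite[Theorem~3.22]{ChenZhang} to package the tensor-functor data as a family $\{\varphi_{i,j}\}$ with the normalization $\varphi_{1,i}=\varphi_{i,1}=\alpha e_i$, and then plugging in $i=j=l=2$ to obtain $\alpha\beta e_2=-\alpha\beta e_2$.

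One point in your step (iii) is not quite right as stated and is precisely where the paper's argument has content. You claim ``every $\xi$-factor entering that equation is $\xi_{\mathbf{e}_2,\mathbf{e}_2}$.'' But since $\mathbf{e}_2\otimes\mathbf{e}_2=\mathbf{e}_1$, the coherence diagram at $(\mathbf{e}_2,\mathbf{e}_2,\mathbf{e}_2)$ also involves $\xi_{\mathbf{e}_2\otimes\mathbf{e}_2,\mathbf{e}_2}=\xi_{\mathbf{e}_1,\mathbf{e}_2}$ on one side and $\xi_{\mathbf{e}_2,\mathbf{e}_2\otimes\mathbf{e}_2}=\xi_{\mathbf{e}_2,\mathbf{e}_1}$ on the other; these are a priori two different scalars. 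Their equality is forced by the \emph{unit} coherence of the tensor functor, which in the paper's formulation is exactly the constraint $\varphi_{1,2}=\varphi_{2,1}=\alpha e_2$. Once that is invoked, your cancellation goes through and the conclusion $1=-1$ follows. So your outline is correct, but the bookkeeping you flag as ``where all the work lies'' must explicitly use the unit axiom, not just the symmetry of the triple.

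Your cohomological framing via $H^3(\mathbb{Z}/2,\mathbb{F}^\times)\cong\mathbb{Z}/2$ is a legitimate and more conceptual alternative; the paper stays entirely within its matrix formalism and does not invoke this.
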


\begin{proof}
We first notice that if $\s$ is a permutation of $\{1,2\}$ such that the additive group endmorphism
$R\ra R$, $r_i\mapsto r_{\s(i)}$, is a ring automorphism, then $\s$ must be the identity
permutation of $\{1,2\}$.
Now assume that $\mathcal{C}_+$ and $\mathcal{C}_-$ are tensor equivalent. Then
by \cite[Theorem 3.22]{ChenZhang}, there exists an $\a\in\mathbb{F}^{\times}$ and a family of
invertible elements $\{\varphi_{i,j}\in M_{\mathbf{c}_{ij}}(A)|1\<i,j\<2\}$ such that
$\varphi_{1, i}=\varphi_{i, 1}=\a E_{\mathbf{e}_i}=\a e_i$ and
$$\begin{array}{rl}
&\sum\limits_{t=1}^2\sum\limits_{1\<k\<c_{ijt}}
a_{i,j,l}(X^{\mathbf{c}_{ij}}_{t,k}\ot e_l)\varphi_{t, l}
(Y^{\mathbf{c}_{ij}}_{t,k}\varphi_{i, j}\ot e_l)\\
=&\sum\limits_{t=1}^2\sum\limits_{1\<k\<c_{jlt}}
(e_i\ot X^{\mathbf{c}_{jl}}_{t,k})\varphi_{i, t}
(e_i\ot Y^{\mathbf{c}_{jl}}_{t,k}\varphi_{j, l})
a'_{i, j, l}\\
\end{array}$$
for all $1\<i,j,l\<2$. Since $\mathbf{c}_{22}=\mathbf{e}_1$, $\varphi_{2,2}=\b e_1$
for some $\b\in\mathbb{F}^{\times}$. Putting $i=j=l=2$ in the above equation, one obtains
$$e_2(e_1\ot e_2)(\a e_2)(e_1(\b e_1)\ot e_2)=(e_2\ot e_1)(\a e_2)(e_2\ot e_1(\b e_1))(-e_2),$$
which implies $\a\b e_2=-\a\b e_2$. This is impossible since char$(\mathbb F)\neq 2$.
\end{proof}

Summarizing the above discussion, we have the following proposition.

\begin{proposition}\label{2.10}
Let $\widehat{\mathcal C}$ be a semisimple tensor category of rank two over $\mathbb F$,
and $V$ a simple object not isomorphic to the unit object $\mathbf 1$.
If $V\ot V\cong m\mathbf{1}$ with $m>0$, then $m=1$ and $\widehat{\mathcal C}$
is tensor equivalent to $\mathcal{C}_+$ or $\mathcal{C}_-$.
\end{proposition}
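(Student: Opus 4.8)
The plan is to reduce the abstract statement about an arbitrary semisimple tensor category $\widehat{\mathcal C}$ of rank two to the concrete combinatorial analysis already carried out in Section~3. First I would invoke the classification result of Chen and Zhang: a semisimple tensor category of finite rank is determined up to tensor equivalence by its Green ring together with its associator constraints. Since $\widehat{\mathcal C}$ has rank two with simple objects $\mathbf 1$ and $V$, and $V\ot V\cong m\mathbf 1$ with $m>0$, its Green ring $r(\widehat{\mathcal C})$ is a $\mathbb Z_+$-ring with unital $\mathbb Z_+$-basis $\{r_1=[\mathbf 1], r_2=[V]\}$ satisfying $r_2^2=mr_1$; this is exactly the ring $R$ with the parameters $n=0$ of the section. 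Hence I may assume $\widehat{\mathcal C}$ is realised (up to equivalence) inside the framework $\mathcal C$ constructed from this $R$ and $A=\mathbb F^2$, with the tensor product given by \eqref{T} and \eqref{t}, and with some choice of associativity data $\{a_{i,j,l}\}$ satisfying Ass~(1)--(4).

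Next I would extract the content of \leref{2.2} and \leref{2.3}: the only nontrivial associator component is $a_{2,2,2}$, and the pentagon axiom for $\widehat{\mathcal C}$ is equivalent, via \prref{2.6}, to the single equation \eqref{a222+1} for the $6$-$j$ symbols. Specialising to the case $n=0$, the discussion immediately following \eqref{a222+1} shows this reduces to $I_m\ot_{\mathbb F}\Lambda_{22}^2=P_{23}$, where $a_{2,2,2}=\Lambda_{22}e_2$ with $\Lambda_{22}\in M_m(\mathbb F)$ an invertible matrix. Comparing block structure on both sides (the left side has scalar blocks $\beta_{ij}I_m$ while the right side is built from the matrix units $I^{[m]}_{i,j}$) forces $m=1$: for $m>1$ no invertible $(\beta_{ij})$ can satisfy the equation, as noted in the text. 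When $m=1$ the equation collapses to $\alpha^2=1$ for the scalar $\alpha$ with $a_{2,2,2}=\alpha e_2$, so $\alpha=\pm 1$, i.e. $a_{2,2,2}=e_2$ or $a_{2,2,2}=-e_2$.

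Finally I would assemble the conclusion. By \leref{2.7} and \leref{2.8}, the two choices $a_{2,2,2}=e_2$ and $a_{2,2,2}=-e_2$ do give genuine tensor categories $\mathcal C_+$ and $\mathcal C_-$; since $\widehat{\mathcal C}$ must have one of these two associators (after fixing bases of the multiplicity spaces as in the construction), the classification theorem \cite[Theorem 3.16]{ChenZhang} yields that $\widehat{\mathcal C}$ is tensor equivalent to $\mathcal C_+$ or to $\mathcal C_-$. One subtle point to address is basis-independence: a priori the normalisation $a_{1,i,j}=a_{i,1,j}=a_{i,j,1}=E_{\mathbf e_i\ot\mathbf e_j}$ and the identification of $a_{2,2,2}$ with a scalar depend on the chosen bases $X^{\mathbf c_{ij}}_{i,k}$ of the Hom-spaces; but a change of basis rescales $a_{2,2,2}$ by a nonzero square times its transpose-conjugate, and in the $1\times 1$ case this is just multiplication by a nonzero scalar, which cannot turn $+1$ into $-1$ when char$(\mathbb F)\neq 2$ (and makes them coincide when char$(\mathbb F)=2$).

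The main obstacle I expect is precisely this last gauge-invariance bookkeeping: one must check that the dichotomy $a_{2,2,2}=\pm e_2$ is intrinsic and not an artefact of normalisation, so that "$\widehat{\mathcal C}$ is one of $\mathcal C_\pm$" is well-posed; the rest is the routine matrix comparison already performed in the text plus a direct appeal to the cited classification and to \leref{2.9} for the non-equivalence of $\mathcal C_+$ and $\mathcal C_-$.
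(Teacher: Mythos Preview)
Your proposal is correct and follows essentially the same approach as the paper: reduce via the Chen--Zhang reconstruction theorem to the matrix analysis of Section~3.1, conclude $m=1$ and $a_{2,2,2}=\pm e_2$, then invoke \leref{2.7}, \leref{2.8}. The only adjustment is that the final step should cite \cite[Corollary 3.15 and Theorem 3.25]{ChenZhang} rather than Theorem 3.16 (which constructs a tensor category from associator data but does not give the classification up to equivalence); once you use those results, your gauge-invariance bookkeeping is already absorbed into the cited statements and \leref{2.9}, so no separate argument is needed.
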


\begin{proof}
Follows from \cite[Corollary 3.15 and Theorem 3.25]{ChenZhang} and the above discussion.
\end{proof}

\subsection{\bf $m=0$ and $n>0$} In this case, $\mathbf{e}_2\ot\mathbf{e}_2=\mathbf{c}_{22}=(0,n)=n\mathbf{e}_2$
and $\mathbf{e}_2\ot\mathbf{e}_2\ot\mathbf{e}_2=(0,n^2)=n^2\mathbf{e}_2$.  Moreover,
$\left\{\begin{array}{ccc}
                                                                       1 & 2 & 2 \\
                                                                       2 & 1 & 1
                                                                     \end{array}
 \right\}=\left\{\begin{array}{ccc}
                                                                       2 & 1 & 2 \\
                                                                       2 & 1 & 2
                                                                     \end{array}\right\}=\left\{\begin{array}{ccc}
                                                                       2 & 2 & 1 \\
                                                                       1 & 1 & 2
                                                                     \end{array}\right\}=\left\{\begin{array}{ccc}
                                                                       2 & 2 & 2 \\
                                                                       2 & 1 & 2
                                                                     \end{array}\right\}=\left\{\begin{array}{ccc}
                                                                       2 & 2 & 1 \\
                                                                       2 & 2 & 1
                                                                     \end{array}\right\}=\left\{\begin{array}{ccc}
                                                                       2 & 2 & 2 \\
                                                                       2 & 2 & 1
                                                                     \end{array}\right\}=\left\{\begin{array}{ccc}
                                                                       2 & 2 & 1 \\
                                                                       2 & 2 & 2
                                                                     \end{array}\right\}=0$,
$a_{2,2,2}=\L_{33}e_2$ and $\L_{33}=\left\{\begin{array}{ccc}
                                                                       2 & 2 & 2 \\
                                                                       2 & 2 & 2
                                                                     \end{array}\right\}$.
Thus the equation (\ref{a222+1}) is equivalent to
\begin{equation}\label{a222+2}
(I_n\ot_{\mathbb F}\L_{33})(\L_{33}\ot_{\mathbb F}I_n)(I_n\ot_{\mathbb F}\L_{33})
=(\L_{33}\ot_{\mathbb F}I_n)P_{23} (\L_{33}\ot_{\mathbb F}I_n).
\end{equation}

where $P_{23}=I_n\ot_{\mathbb F}I_{[n]}$. In case $n=1$, one can check that $\L_{33}=I_1\in M_1(\mathbb F)$ is the unique matrix satisfying the equation (\ref{a222+2}).
In this case, $a_{2,2,2}=e_2$, $\mathcal C$ becomes a tensor category, denoted  $\mathcal{C}_1$,
with the associativity constraint $a$ induced by $\{a_{i,j,l}|1\<i,j,l\<2\}$ as before.
Thus, we have the following proposition.

\begin{proposition}\label{2.11}
Let $\widehat{\mathcal C}$ be a semisimple tensor category of rank two over $\mathbb F$,
and $V$ a simple object not isomorphic to the unit object $\mathbf 1$.
If $V\ot V\cong V$, then $\widehat{\mathcal C}$
is tensor equivalent to $\mathcal{C}_1$.
\end{proposition}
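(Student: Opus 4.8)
The plan is to show that the hypotheses force $m=0$, $n=1$ in the $\mathbb{Z}_+$-ring $R=r(\widehat{\mathcal C})$, and then invoke the earlier analysis of that case. First I would observe that since $\widehat{\mathcal C}$ is a semisimple tensor category of rank two, its Green ring $r(\widehat{\mathcal C})$ is a unital $\mathbb{Z}_+$-ring with $\mathbb{Z}_+$-basis $\{1=[\mathbf 1],[V]\}$, and the assumption $V\otimes V\cong V$ translates into $[V]^2=[V]$, i.e.\ $m=0$ and $n=1$ in the notation $r_2^2=mr_1+nr_2$. In particular $r_2^2\neq 0$, so we are in the framework of this section with the subcase $m=0$, $n>0$, specifically $n=1$.

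Next I would recall, as established just above in the $m=0$, $n>0$ discussion, that the only constraint to be solved is equation~(\ref{a222+2}), and that when $n=1$ the matrices $\L_{33}\in M_1(\mathbb F)$ satisfying it reduce to $\L_{33}=I_1$; equivalently $a_{2,2,2}=e_2$ is the unique choice. Here $I_n\ot_{\mathbb F}\L_{33}$, $\L_{33}\ot_{\mathbb F}I_n$ and $P_{23}=I_n\ot_{\mathbb F}I_{[n]}$ are all $1\times 1$, so (\ref{a222+2}) becomes $\L_{33}^3=\L_{33}^2$, forcing $\L_{33}=1$ since $\L_{33}\in\mathbb{F}^{\times}$ (recall $a_{2,2,2}$ is assumed invertible). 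By Lemmas~\ref{2.2} and~\ref{2.3}, together with this verification of~(\ref{Ass4}) for $i_1=i_2=i_3=i_4=2$, the family $\{a_{i,j,l}\mid 1\<i,j,l\<2\}$ satisfies the conditions Ass(1)--(4), so by \cite[Theorem 3.16]{ChenZhang} the category $\mathcal C$ becomes the tensor category $\mathcal{C}_1$.

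Finally I would apply the uniqueness/classification machinery from \cite{ChenZhang}: by \cite[Corollary 3.15]{ChenZhang}, any semisimple tensor category of rank two whose simple objects and fusion rule match those of $\mathcal C$ arises from a solution of~(\ref{a222+1}) of the above shape, up to the equivalence described in \cite[Theorem 3.22 and Theorem 3.25]{ChenZhang}; since the solution is unique, $\widehat{\mathcal C}$ is tensor equivalent to $\mathcal{C}_1$. The only mild subtlety — and the step I would be most careful about — is making sure that the passage from the abstract category $\widehat{\mathcal C}$ to the combinatorial model $\mathcal C$ is justified, i.e.\ that \cite[Corollary 3.15]{ChenZhang} genuinely reduces the classification of all such $\widehat{\mathcal C}$ to choices of the associator data $a_{i,j,l}$ in our normalized form; once that reduction is in hand, the rank-one linear-algebra computation for $n=1$ is immediate and leaves no freedom. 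Hence the proof is essentially a citation of \cite[Corollary 3.15 and Theorem 3.25]{ChenZhang} combined with the preceding discussion of the $m=0$, $n=1$ case.
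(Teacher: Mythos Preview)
Your proposal is correct and follows essentially the same approach as the paper: the paper's proof is the one-line ``Follows from \cite[Corollary 3.15 and Theorem 3.25]{ChenZhang} and the above discussion,'' and you have simply unpacked what ``the above discussion'' means in the $m=0$, $n=1$ case. Your explicit verification that (\ref{a222+2}) collapses to $\L_{33}^3=\L_{33}^2$, hence $\L_{33}=1$, is exactly the computation the paper alludes to.
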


\begin{proof}
Follows from \cite[Corollary 3.15 and Theorem 3.25]{ChenZhang} and the above discussion.
\end{proof}

For $n>1$, put $\L_{33}=I_{[n]}$. Then it is straightforward to verify that the matrix  $\L_{33}$ satisfies the equation (\ref{a222+2}).
In this case, $a_{2,2,2}=I_{[n]}e_2$, and $\mathcal C$ becomes a tensor category, denoted by $\mathcal{C}_n$,
with the associativity constraint $a$ induced
by $\{a_{i,j,l}|1\<i,j,l\<2\}$ as before. Similarly, putting
$$\Lambda_{33}=\left(\begin{array}{ccccc}
I_{1,1}&I_{2,1}&\cdots&I_{n-1,1}&I_{n,1}\\
I_{2,2}&I_{3,2}&\cdots&I_{n,2}&I_{1,2}\\
\cdots&\cdots&\cdots&\cdots&\cdots\\
I_{n-1,n-1}&I_{n,n-1}&\cdots&I_{n-3,n-1}&I_{n-2,n-1}\\
I_{n,n}&I_{1,n}&\cdots&I_{n-2,n}&I_{n-1,n}\\
\end{array}\right).$$
Let $a'_{2,2,2}=\L_{33} e_2$ and $a'_{1,i,j}=a'_{i,1,j}=a'_{i,j,1}=E_{\mathbf{e}_i\ot\mathbf{e}_j}$
for $1\<i,j\<2$.
Then $\mathcal C$ is also a tensor category, denoted $\mathcal{C}'_n$, with the associativity constraint $a'$ induced by
$\{a'_{i,j,l}|1\<i,j,l\<2\}$ as above.  Similarly to Lemma \ref{2.9}, one can prove the following proposition.

\begin{proposition}\label{2.12}
If $n>1$, then the tensor categories $\mathcal{C}_n$ and $\mathcal{C}'_n$ are not tensor equivalent.
\end{proposition}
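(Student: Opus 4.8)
The plan is to follow the proof of Lemma~\ref{2.9} almost verbatim, replacing its ``$x=-x$'' obstruction by a statement about the order of a permutation matrix. Suppose, for a contradiction, that $\mathcal{C}_n$ and $\mathcal{C}'_n$ are tensor equivalent. As observed in the proof of Lemma~\ref{2.9}, the only permutation $\s$ of $\{1,2\}$ for which $r_i\mapsto r_{\s(i)}$ is a ring automorphism of $R$ is the identity (it must fix $r_1=1$), so \cite[Theorem~3.22]{ChenZhang} applies and produces an $\a\in\mathbb{F}^{\times}$ together with invertible elements $\{\varphi_{i,j}\in M_{\mathbf{c}_{ij}}(A)\mid 1\<i,j\<2\}$ satisfying $\varphi_{1,i}=\varphi_{i,1}=\a e_i$ and the compatibility equation of \cite[Theorem~3.22]{ChenZhang} (the one displayed in the proof of Lemma~\ref{2.9}) for all $1\<i,j,l\<2$. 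Since $m=0$ we have $\mathbf{c}_{22}=n\mathbf{e}_2$, so $M_{\mathbf{c}_{22}}(A)=M_n(\mathbb{F}e_2)$ and $\varphi_{2,2}=Pe_2$ for some $P\in GL_n(\mathbb{F})$.

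The heart of the argument is to specialize that equation to $i=j=l=2$. The instances in which one of $i,j,l$ equals $1$ hold automatically, since there the associators are trivial in both categories and $\varphi_{1,i}=\varphi_{i,1}=\a e_i$ (a routine check, analogous to Lemma~\ref{2.3}), so the case $(2,2,2)$ is the only genuine constraint. There $c_{221}=m=0$, so only the summand with $t=2$ survives; substituting $a_{2,2,2}=I_{[n]}e_2$ (for $\mathcal{C}_n$), $a'_{2,2,2}=\Lambda_{33}e_2$ (for $\mathcal{C}'_n$), $\varphi_{2,2}=Pe_2$, the explicit matrices $X^{\mathbf{c}_{22}}_{2,k},Y^{\mathbf{c}_{22}}_{2,k}$, the tensor product $(\ref{t})$ on morphisms, and the identifications of Lemmas~\ref{2.4}--\ref{2.5}, the equation collapses to
$$Q\,\Lambda_{33}=I_{[n]}\,Q,\qquad Q:=(I_n\ot_{\mathbb{F}}P)(P\ot_{\mathbb{F}}I_n)\in GL_{n^2}(\mathbb{F}).$$
The substance of this computation is that, after the identifications of Lemmas~\ref{2.4}--\ref{2.5}, the two ``framing'' matrices arising on the left- and right-hand sides of the equation of \cite[Theorem~3.22]{ChenZhang} coincide (both equal the invertible matrix $Q$ above), which is precisely what forces the relation to be a genuine conjugation rather than a mere equivalence of matrices. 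In particular $\Lambda_{33}=Q^{-1}I_{[n]}Q$, so $\Lambda_{33}$ and $I_{[n]}$ are conjugate in $GL_{n^2}(\mathbb{F})$.

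It remains to produce an invariant separating the two matrices. Now $I_{[n]}$ is the flip of $\mathbb{F}^n\ot_{\mathbb{F}}\mathbb{F}^n$, hence an involution: $I_{[n]}^2=I_{n^2}$. On the other hand, reading off the displayed form of $\Lambda_{33}$, its $(p,q)$ block equals the matrix unit $E_{\langle p+q-1\rangle,\,p}$, where $\langle x\rangle$ denotes the representative of $x$ modulo $n$ lying in $\{1,\dots,n\}$; a one-line block multiplication then shows that the $(p,q)$ block of $\Lambda_{33}^2$ is $E_{\langle 2p-q\rangle,\,\langle p-q+1\rangle}$, which is never the zero matrix. Hence $\Lambda_{33}^2\neq I_{n^2}$ whenever $n>1$ (equivalently, the permutation encoded by $\Lambda_{33}$, namely $(q,b)\mapsto(b,\langle q+b-1\rangle)$, is not an involution for $n>1$). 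But a matrix conjugate to an involution is itself an involution, so $\Lambda_{33}^2=Q^{-1}I_{[n]}^2Q=I_{n^2}$ --- a contradiction. Therefore $\mathcal{C}_n$ and $\mathcal{C}'_n$ are not tensor equivalent. The only nonroutine part of the proof is the middle step, namely unwinding the equation of \cite[Theorem~3.22]{ChenZhang} at $(i,j,l)=(2,2,2)$ and checking that it reduces exactly to $Q\Lambda_{33}=I_{[n]}Q$; everything afterwards is immediate.
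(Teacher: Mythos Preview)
Your proof is correct and follows exactly the route the paper indicates (namely, invoke \cite[Theorem~3.22]{ChenZhang} as in Lemma~\ref{2.9}); the paper's own proof is only the single sentence ``It can be shown by using \cite[Theorem~3.22]{ChenZhang}'', so you have supplied the details the authors omitted. Your key reduction is right: with $m=0$ both $\sum_k(X^{\mathbf{c}_{22}}_{2,k}\ot e_2)\varphi_{2,2}(Y^{\mathbf{c}_{22}}_{2,k}\varphi_{2,2}\ot e_2)$ and $\sum_k(e_2\ot X^{\mathbf{c}_{22}}_{2,k})\varphi_{2,2}(e_2\ot Y^{\mathbf{c}_{22}}_{2,k}\varphi_{2,2})$ equal the block matrix with $(k,j)$ block $p_{kj}P$, i.e.\ $Q=(I_n\ot_{\mathbb F}P)(P\ot_{\mathbb F}I_n)=P\ot_{\mathbb F}P$, giving $I_{[n]}Q=Q\Lambda_{33}$; and your order-of-permutation invariant ($I_{[n]}^2=I_{n^2}$ while every block of $\Lambda_{33}^2$ is a nonzero matrix unit) cleanly finishes the argument.
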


\begin{proof}
It can be shown by using \cite[Theorem 3.22]{ChenZhang}.	
\end{proof}

\subsection{$m>0$ and $n>0$}
In this case, $\mathbf{e}_2\ot\mathbf{e}_2=\mathbf{c}_{22}=(m,n)=m\mathbf{e}_1+n\mathbf{e}_2$
and $\mathbf{e}_2\ot\mathbf{e}_2\ot\mathbf{e}_2=(mn,m+n^2)=mn\mathbf{e}_1+(m+n^2)\mathbf{e}_2$.
Hence the equation (\ref{a222+1}) holds if and only if the following equations are satisfied:\\
\mbox{\hspace{0.4cm}}(i)  $I_m\ot_{\mathbb F}\L_{22}^2+(I_m\ot_{\mathbb F}\L_{23})(\L_1\ot_{\mathbb F}I_n)(I_m\ot_{\mathbb F}\L_{32})=I_{[m]}$,\\
\mbox{\hspace{0.3cm}}(ii) $I_m\ot_{\mathbb F}\L_{22}\L_{23}+(I_m\ot_{\mathbb F}\L_{23})(\L_1\ot_{\mathbb F}I_n)(I_m\ot_{\mathbb F}\L_{33})=0$,\\
\mbox{\hspace{0.2cm}}(iii) $I_m\ot_{\mathbb F}\L_{32}\L_{22}+(I_m\ot_{\mathbb F}\L_{33})(\L_1\ot_{\mathbb F}I_n)(I_m\ot_{\mathbb F}\L_{32})=0$,\\
\mbox{\hspace{0.2cm}}(iv) $I_m\ot_{\mathbb F}\L_{32}\L_{23}+(I_m\ot_{\mathbb F}\L_{33})(\L_1\ot_{\mathbb F}I_n)(I_m\ot_{\mathbb F}\L_{33})$\\
\mbox{\hspace{1cm}}$=(\L_1\ot_{\mathbb F}I_n)(I_m\ot_{\mathbb F}I_{[n]})(\L_1\ot_{\mathbb F}I_n)$,\\
\mbox{\hspace{0.3cm}}(v) $\L_1(\L_{22}\ot_{\mathbb F}I_n)\L_1
=(\L_{23}\ot_{\mathbb F}I_n)(I_n\ot_{\mathbb F}I_{[n]})(\L_{32}\ot_{\mathbb F}I_n)$,\\
\mbox{\hspace{0.2cm}}(vi) $\L_1(\L_{23}\ot_{\mathbb F}I_n)(I_n\ot_{\mathbb F}\L_{32})
=(\L_{22}\ot_{\mathbb F}I_n)I_{[m,n]}$,\\
\mbox{\hspace{0.1cm}}(vii) $\L_1(\L_{23}\ot_{\mathbb F}I_n)(I_n\ot_{\mathbb F}\L_{33})
=(\L_{23}\ot_{\mathbb F}I_n)(I_n\ot_{\mathbb F}I_{[n]})(\L_{33}\ot_{\mathbb F}I_n)$,\\
(viii) $(I_n\ot_{\mathbb F}\L_{23})(\L_{32}\ot_{\mathbb F}I_n)\L_1
=I^T_{[m,n]}(\L_{22}\ot_{\mathbb F}I_n)$,\\
\mbox{\hspace{0.2cm}}(ix)  $I_n\ot_{\mathbb F}\L_{22}^2+(I_n\ot_{\mathbb F}\L_{23})(\L_{33}\ot_{\mathbb F}I_n)(I_n\ot_{\mathbb F}\L_{32})=0$,\\
\mbox{\hspace{0.3cm}}(x) $I_n\ot_{\mathbb F}\L_{22}\L_{23}+(I_n\ot_{\mathbb F}\L_{23})(\L_{33}\ot_{\mathbb F}I_n)(I_n\ot_{\mathbb F}\L_{33})
=I^T_{[m,n]}(\L_{23}\ot_{\mathbb F}I_n)$,\\
\mbox{\hspace{0.2cm}}(xi) $(I_n\ot_{\mathbb F}\L_{33})(\L_{32}\ot_{\mathbb F}I_n)\L_1
=(\L_{33}\ot_{\mathbb F}I_n)(I_n\ot_{\mathbb F}I_{[n]})(\L_{32}\ot_{\mathbb F}I_n)$,\\
\mbox{\hspace{0.1cm}}(xii) $I_n\ot_{\mathbb F}\L_{32}\L_{22}+(I_n\ot_{\mathbb F}\L_{33})(\L_{33}\ot_{\mathbb F}I_n)(I_n\ot_{\mathbb F}\L_{32})
=(\L_{32}\ot_{\mathbb F}I_n)I_{[m,n]}$,\\
(xiii)  $I_n\ot_{\mathbb F}\L_{32}\L_{23}+(I_n\ot_{\mathbb F}\L_{33})(\L_{33}\ot_{\mathbb F}I_n)(I_n\ot_{\mathbb F}\L_{33})$\\
\mbox{\hspace{1cm}}$=(\L_{33}\ot_{\mathbb F}I_n)(I_n\ot_{\mathbb F}I_{[n]})(\L_{33}\ot_{\mathbb F}I_n)$.\\

Assume that $m=n=1$. Then $\L_1=(\a)$ and
$\L_2=\left(\begin{array}{cc}
\a_{22}&\a_{23}\\
\a_{32}&\a_{33}\\
\end{array}\right)$
for some $\a, \a_{ij}\in\mathbb F$ with $\a\neq 0$ and $\a_{22}\a_{33}-\a_{23}\a_{32}\neq0$.
In this case, a straightforward computation shows that the equations (i)-(xiii) are equivalent to
that $\a=1$,  $\a_{33}=-\a_{22}$, $\a_{23}\a_{32}=\a_{22}$ and $\a_{22}^2+\a_{22}=1$.
Let $f(x)=x^2+x-1\in\mathbb{F}[x]$.

In case char$(\mathbb F)=5$, the polynomial $f(x)$ has a unique root $2$ in $\mathbb F$.
Let $$a_{2,2,2}=\left(\begin{array}{ccc}
e_1&0&0\\
0&2e_2&e_2\\
0&2e_2&3e_2\\
\end{array}\right).$$
Then by the above discussion, the equation (\ref{a222}) is satisfied, and hence $\mathcal C$
becomes a tensor category, denoted by $\mathcal{C}_{(5)}$,
with the associativity constraint $a$ induced by $\{a_{i,j,l}|1\<i,j,l\<2\}$
as before. For any $\a\in\mathbb{F}^{\times}$, let
$$a'_{2,2,2}=\left(\begin{array}{ccc}
e_1&0&0\\
0&2e_2&\a e_2\\
0&2\a^{-1}e_2&3e_2\\
\end{array}\right)$$
and $a'_{1,i,j}=a'_{I,1,j}=a'_{i,j,1}=E_{\mathbf{e_i}\ot\mathbf{e_j}}$ for $1\<i,j\<2$.
Similarly, $\mathcal C$ also becomes a tensor category, denoted  $\mathcal{C}_{[\a]}$,
with the associativity constraint $a'$ induced by $\{a'_{i,j,l}|1\<i,j,l\<2\}$ as before.

\begin{proposition}\label{2.13}
Assume that char$(\mathbb F)=5$. Let $\widehat{\mathcal C}$ be a semisimple tensor category of rank two over $\mathbb F$,
and $V$ a simple object not isomorphic to the unit object $\mathbf 1$.
If $V\ot V\cong\mathbf{1}\oplus V$, then $\widehat{\mathcal C}$
is tensor equivalent to $\mathcal{C}_{(5)}$.
\end{proposition}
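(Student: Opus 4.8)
The plan is to reduce the proposition to the classification of associators already carried out in the preceding discussion, exactly in the spirit of the proofs of Proposition~\ref{2.10} and Proposition~\ref{2.11}. Suppose $\widehat{\mathcal C}$ is a semisimple tensor category of rank two with simple objects $\mathbf 1$ and $V$, where $V\ot V\cong\mathbf 1\oplus V$; this is precisely the case $m=n=1$ of the setup in Subsection~3.3, so the Green ring $r(\widehat{\mathcal C})$ is the $\mathbb Z_+$-ring $R$ with $r_2^2=r_1+r_2$. By \cite[Corollary 3.15]{ChenZhang}, $\widehat{\mathcal C}$ is tensor equivalent to one of the categories $\mathcal C$ built in Section~2 from some choice of associator data $\{a_{i,j,l}\}$ satisfying Ass(1)--(4); and by Lemmas~\ref{2.2} and \ref{2.3} the only nontrivial constraint is the component $a_{2,2,2}$, which must be an invertible element of $M_{\mathbf e_2\ot\mathbf e_2\ot\mathbf e_2}(A)$ solving the Biedenharn--Elliot identity~\eqref{a222+1}, equivalently the system (i)--(xiii).

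\emph{Second}, I would invoke the computation already recorded just before the statement: when $m=n=1$, the system (i)--(xiii) is equivalent to $\a=1$, $\a_{33}=-\a_{22}$, $\a_{23}\a_{32}=\a_{22}$, and $\a_{22}^2+\a_{22}=1$, i.e. $\a_{22}$ is a root of $f(x)=x^2+x-1$. Over a field of characteristic $5$ one has $f(x)=(x-2)^2$, so the only solution is $\a_{22}=2$, forcing $\a_{33}=-2=3$ and $\a_{23}\a_{32}=2$ with $\a_{23},\a_{32}\in\mathbb F^\times$ otherwise free. Thus every admissible $a_{2,2,2}$ has the form $\mathrm{diag}(e_1,\ \Lambda_2 e_2)$ with $\Lambda_2=\left(\begin{smallmatrix}2&\a\\ 2\a^{-1}&3\end{smallmatrix}\right)$ for some $\a\in\mathbb F^\times$; this is exactly the family $\{\mathcal C_{[\a]}\}$ (with $\mathcal C_{(5)}=\mathcal C_{[1]}$).

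\emph{Third}, I would show all of these categories are tensor equivalent to $\mathcal C_{(5)}$. Using \cite[Theorem 3.22]{ChenZhang}, a tensor equivalence $\mathcal C_{[\a]}\simeq\mathcal C_{(5)}$ amounts to producing a scalar $\gamma\in\mathbb F^\times$ and invertible $\varphi_{i,j}\in M_{\mathbf c_{ij}}(A)$ with $\varphi_{1,i}=\varphi_{i,1}=\gamma e_i$ satisfying the gauge equation displayed in the proof of Lemma~\ref{2.9}; here $\varphi_{2,2}=\mathrm{diag}(\beta e_1,\ \mu e_2)$ for scalars $\beta,\mu\in\mathbb F^\times$. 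Plugging $i=j=l=2$ into that equation and reading off the $e_1$- and $e_2$-blocks, the $e_1$-block is automatically consistent (both associators act by $e_1$ there), while the $e_2$-block reduces to a single scalar relation forcing $\a=$ (a ratio of the $\varphi$'s); one checks $\gamma=1$, $\beta=\mu$, and $\varphi_{2,2}=\mathrm{diag}(e_1,\ \mathrm{diag}(1,\a)\,e_2)$ solves it. Hence $\mathcal C_{[\a]}\simeq\mathcal C_{(5)}$ for every $\a$, and combining with the second step, $\widehat{\mathcal C}$ is tensor equivalent to $\mathcal C_{(5)}$.

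\emph{The main obstacle} I anticipate is the bookkeeping in the third step: one must track how the block matrix $\varphi_{2,2}$ and the permutation factors $X^{\mathbf c_{ij}}_{t,k}, Y^{\mathbf c_{ij}}_{t,k}$ interact inside the gauge equation of \cite[Theorem 3.22]{ChenZhang} for the single nontrivial triple $(2,2,2)$, since the ambient object $\mathbf e_2\ot\mathbf e_2\ot\mathbf e_2$ has multiplicity $(1,2)$ and one must verify the $2\times 2$ $e_2$-block equation rather than just a scalar one. This is entirely mechanical once the $6$-$j$ symbol dictionary of Section~2 is in hand, but it is where an error could hide; I would double-check it by specializing to $\a=1$ (where $\varphi_{2,2}$ is scalar) and confirming the identity reduces to a triviality.
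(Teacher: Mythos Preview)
Your approach is essentially identical to the paper's: reduce via \cite[Corollary~3.15, Theorem~3.25]{ChenZhang} to the one-parameter family $\mathcal C_{[\a]}$ determined in the preceding discussion, then gauge away $\a$ to land in $\mathcal C_{(5)}$. The paper carries out the third step using \cite[Definition~3.19 and Proposition~3.20]{ChenZhang} rather than Theorem~3.22, but this is the same machinery.

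One concrete correction to your third step: since $m=n=1$, the matrix $\varphi_{2,2}\in M_{\mathbf c_{22}}(A)$ has a $1\times 1$ $e_1$-block and a $1\times 1$ $e_2$-block, so your expression $\mathrm{diag}(e_1,\ \mathrm{diag}(1,\a)\,e_2)$ is ill-typed. The paper's choice is the scalar matrix $\eta(2,2)=\left(\begin{smallmatrix}\a e_1&0\\0&\a e_2\end{smallmatrix}\right)$ together with $\eta(1,1)=e_1$, $\eta(1,2)=\eta(2,1)=e_2$; this corresponds in your notation to $\gamma=1$ and $\beta=\mu=\a$, which is exactly what you asserted just before writing down the wrong matrix. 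With that choice the gauge equation at $(2,2,2)$ is immediate to verify, so the bookkeeping obstacle you flagged dissolves.
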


\begin{proof}
By \cite[Corollary 3.15 and Theorem 3.25]{ChenZhang} and the discussion above,
there exists a nonzero scale $\a$ in $\mathbb{F}$ such that $\widehat{\mathcal C}$
is tensor equivalent to $\mathcal{C}_{[\a]}$. Let
$a_{2,2,2}$ and $a'_{2,2,2}$ be the matrices given above,
and let $a'_{1,i,j}=a'_{i,1,j}=a'_{i,j,1}=E_{\mathbf{e_i}\ot\mathbf{e_j}}$ for $1\<i,j\<2$.
Then the associativity constraints of $\mathcal{C}_{(5)}$ and $\mathcal{C}_{[\a]}$
are induced by $\{a_{i,j,l}|1\<i,j,l\<2\}$ and $\{a'_{i,j,l}|1\<i,j,l\<2\}$, respectively.
Let $\eta(1,1)=e_1$, $\eta(1,2)=\eta(2,1)=e_2$ and
$\eta(2,2)=\left(\begin{array}{cc}
\a e_1&0\\
0&\a e_2\\
\end{array}\right)$.
Then it is straightforward to check that the two families $\{a_{i,j,l}|1\<i,j,l\<2\}$ and $\{a'_{i,j,l}|1\<i,j,l\<2\}$
are equivalent via the family of matrices $\{\eta(i,j)\in M_{\mathbf{c}_{ij}}(A)|1\<i,j\<2\}$
(see \cite[Definition 3.19]{ChenZhang}).
Thus, it follows from \cite[Proposition 3.20]{ChenZhang} that ${\mathcal C}_{(5)}$
is tensor equivalent to $\mathcal{C}_{[\a]}$, hence
$\widehat{\mathcal C}$ is tensor equivalent to $\mathcal{C}_{(5)}$.
\end{proof}

In case char$(\mathbb F)\neq 5$, the polynomial $f(x)$ has two distinct roots $\o_1$ and $\o_2$
in $\mathbb F$. If char$(\mathbb F)\neq 2$, then $\o_1=\frac{-1+\sqrt{5}}{2}$ and $\o_2=\frac{-1-\sqrt{5}}{2}$. If char$(\mathbb F)=2$, then  $\o_1$ is a $3^{th}$ primitive root of unity and $\o_2=\o_1^2$.
Let $$a_{2,2,2}=\left(\begin{array}{ccc}
e_1&0&0\\
0&\o_1e_2&e_2\\
0&\o_1e_2&-\o_1e_2\\
\end{array}\right).$$
Then by the discussion before, $\mathcal C$ becomes a tensor category, denoted  $\mathcal{C}_{(0)}$,
with the associativity constraint $a$ induced by $\{a_{i,j,l}|1\<i,j,l\<2\}$.
Similarly, let $$a'_{2,2,2}=\left(\begin{array}{ccc}
e_1&0&0\\
0&\o_2e_2&e_2\\
0&\o_2e_2&-\o_2e_2\\
\end{array}\right)$$
and $a'_{1,i,j}=a'_{i,1,j}=a'_{i,j,1}=E_{\mathbf{e_i}\ot\mathbf{e_j}}$ for $1\<i,j\<2$.
Then $\mathcal C$ also becomes a tensor category, denoted $\mathcal{C}'_{(0)}$
with the associativity constraint $a'$ induced by $\{a'_{i,j,l}|1\<i,j,l\<2\}$.
For any $0\neq\a\in\mathbb F$, let
$$a''_{2,2,2}=\left(\begin{array}{ccc}
e_1&0&0\\
0&\o_1e_2&\a e_2\\
0&\a^{-1}\o_1e_2&-\o_1e_2\\
\end{array}\right)$$
and $a''_{1,i,j}=a''_{i,1,j}=a''_{i,j,1}=E_{\mathbf{e_i}\ot\mathbf{e_j}}$ for $1\<i,j\<2$.
Then by an argument similar to the proof of Proposition \ref{2.13}, one can check that
$\{a_{i,j,l}|1\<i,j,l\<2\}$ and $\{a''_{i,j,l}|1\<i,j,l\<2\}$ are equivalent.
Simialrly, if we set
$$a'''_{2,2,2}=\left(\begin{array}{ccc}
e_1&0&0\\
0&\o_2e_2&\a e_2\\
0&\a^{-1}\o_2e_2&-\o_2e_2\\
\end{array}\right)$$
and $a'''_{1,i,j}=a'''_{i,1,j}=a'''_{i,j,1}=E_{\mathbf{e_i}\ot\mathbf{e_j}}$ for $1\<i,j\<2$,
then $\{a'_{i,j,l}|1\<i,j,l\<2\}$ and $\{a'''_{i,j,l}|1\<i,j,l\<2\}$ are equivalent.
Thus, we have the following proposition.

\begin{proposition}\label{2.14}
Assume that char$(\mathbb F)\neq 5$. Let $\widehat{\mathcal C}$ be a semisimple tensor category of rank two over $\mathbb F$,
and $V$ a simple object not isomorphic to the unit object $\mathbf 1$.
If $V\ot V\cong\mathbf{1}\oplus V$, then $\widehat{\mathcal C}$
is tensor equivalent to $\mathcal{C}_{(0)}$ or $\mathcal{C}'_{(0)}$.
Moreover, $\mathcal{C}_{(0)}$ and $\mathcal{C}'_{(0)}$ are not tensor equivalent.
\end{proposition}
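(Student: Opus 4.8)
The plan is to first classify the semisimple rank-two tensor categories with $V\ot V\cong\mathbf 1\oplus V$, and then to prove that $\mathcal C_{(0)}\not\simeq\mathcal C'_{(0)}$. For the classification I would start from \cite[Corollary 3.15 and Theorem 3.25]{ChenZhang}: such a $\widehat{\mathcal C}$ is tensor equivalent to one of the categories $\mathcal C$ constructed above, for some $a_{2,2,2}$ making $\{a_{i,j,l}\mid1\<i,j,l\<2\}$ satisfy Ass(1)--(4); by Lemmas \ref{2.2}, \ref{2.3} and Proposition \ref{2.6} this amounts to requiring that $a_{2,2,2}$ solve the system (i)--(xiii). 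As recorded just before the statement, for $m=n=1$ this system holds precisely when $\alpha=1$, $\alpha_{33}=-\alpha_{22}$, $\alpha_{23}\alpha_{32}=\alpha_{22}$ and $f(\alpha_{22})=0$, where $f(x)=x^2+x-1$. Since char$(\mathbb F)\neq5$, $f$ has the two distinct roots $\o_1,\o_2$, so $\alpha_{22}\in\{\o_1,\o_2\}$, and once the root is fixed the only remaining freedom is the choice of $\alpha_{23}\in\mathbb F^{\times}$ (with $\alpha_{32}=\alpha_{22}\alpha_{23}^{-1}$). If $\alpha_{22}=\o_1$, then $a_{2,2,2}$ is exactly the matrix $a''_{2,2,2}$ with parameter $\alpha=\alpha_{23}$; since the families $\{a_{i,j,l}\}$ of $\mathcal C_{(0)}$ and $\{a''_{i,j,l}\}$ were already shown to be equivalent via the scalar gauge family $\{\eta(i,j)\}$ used in the proof of Proposition \ref{2.13} (here $\eta(2,2)=\alpha_{23}E_{\mathbf c_{22}}$), \cite[Proposition 3.20]{ChenZhang} gives $\widehat{\mathcal C}\simeq\mathcal C_{(0)}$. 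If $\alpha_{22}=\o_2$, the same argument with $a'''_{2,2,2}$ in place of $a''_{2,2,2}$ gives $\widehat{\mathcal C}\simeq\mathcal C'_{(0)}$.

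For the second assertion I would argue as in the proof of Lemma \ref{2.9}. A tensor equivalence $\mathcal C_{(0)}\simeq\mathcal C'_{(0)}$ induces a ring automorphism $r_i\mapsto r_{\s(i)}$ of $R$, and since $r_2^2=r_1+r_2$ the permutation $\s$ of $\{1,2\}$ must be the identity; hence by \cite[Theorem 3.22]{ChenZhang} there exist $\alpha\in\mathbb F^{\times}$ and invertible elements $\varphi_{i,j}\in M_{\mathbf c_{ij}}(A)$ with $\varphi_{1,i}=\varphi_{i,1}=\alpha e_i$ satisfying the coherence identity of that theorem for all $1\<i,j,l\<2$. Because $\mathbf c_{22}=\mathbf e_1\oplus\mathbf e_2$, we may write $\varphi_{2,2}$ in block form $\mathrm{diag}(\beta_1e_1,\beta_2e_2)$ with $\beta_1,\beta_2\in\mathbb F^{\times}$. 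Putting $i=j=l=2$ in that identity, both sides become endomorphisms of $\mathbf e_2\ot\mathbf e_2\ot\mathbf e_2=(mn,m+n^2)=(1,2)$; I would then compare the $1\times1$ block in the $\mathbb F e_2$-part that corresponds to the $6$-$j$ symbol $\left\{\begin{array}{ccc}2&2&1\\2&2&1\end{array}\right\}$, i.e.\ the $\Lambda_{22}$-entry, which is governed by $\o_1$ on the left (through $a_{2,2,2}$ of $\mathcal C_{(0)}$) and by $\o_2$ on the right (through $a'_{2,2,2}$ of $\mathcal C'_{(0)}$). The accompanying scalars $\alpha$ and $\beta_1$ then cancel, leaving $\o_1=\o_2$, which is impossible since $f$ has distinct roots. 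This contradiction shows $\mathcal C_{(0)}$ and $\mathcal C'_{(0)}$ are not tensor equivalent.

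The step I expect to be the real obstacle is this last comparison: one must unwind the definitions of $\ot$ and $\Pi$, the projectors $X^{\mathbf c_{22}}_{t,k}Y^{\mathbf c_{22}}_{t,k}$, and the permutation matrices $P$ --- using the explicit index formulas from the proof of Lemma \ref{2.4} --- in order to be certain that the selected $1\times1$ block really does isolate $\o_1$ against $\o_2$ with matching prefactors, and is not blurred by the scalars $\alpha,\beta_1,\beta_2$; this difficulty does not arise in Lemma \ref{2.9}, where $a_{2,2,2}$ is a single scalar. If that bookkeeping becomes unwieldy, I would instead verify once and for all that the $\Lambda_{22}$-entry is fixed by every admissible gauge family $\{\eta(i,j)\}$ --- it is sent to $\alpha\beta_1\,\Lambda_{22}\,(\alpha\beta_1)^{-1}=\Lambda_{22}$, all the relevant components of $\eta$ being scalars --- so that $\o_1\neq\o_2$ obstructs the equivalence directly.
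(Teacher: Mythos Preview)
Your proposal is correct and follows essentially the same route as the paper. For the first assertion the paper simply says ``follows from an argument similar to the proof of Proposition \ref{2.13} and the above discussion,'' which is exactly the reduction you spell out via \cite[Corollary 3.15, Theorem 3.25, Proposition 3.20]{ChenZhang}; for the second assertion the paper invokes \cite[Theorem 3.22]{ChenZhang} and then asserts that the two families $\{a_{i,j,l}\}$ and $\{a'_{i,j,l}\}$ are ``easy to check'' inequivalent, without giving the verification. Your plan to isolate the scalar $\Lambda_{22}=\left\{\begin{smallmatrix}2&2&1\\2&2&1\end{smallmatrix}\right\}$ and observe that it is fixed by every admissible gauge (the source $H^{12}_2\otimes H^{22}_1$ and target $H^{21}_2\otimes H^{22}_1$ are both one-dimensional and are rescaled by the \emph{same} factor $\alpha\beta_1$, so the induced action on the $1\times1$ matrix $\Lambda_{22}$ is trivial) is a clean way to carry out that check and avoids the block bookkeeping you worry about; it immediately gives $\o_1=\o_2$, a contradiction.
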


\begin{proof}
The first statement follows from an argument similar to the proof of Proposition \ref{2.13} and the above discussion.
By \cite[Theorem 3.22]{ChenZhang}, one can see that $\mathcal{C}_{(0)}$ and $\mathcal{C}'_{(0)}$
are tensor equivalent if and only if $\{a_{i,j,l}|1\<i,j,l\<2\}$ and $\{a'_{i,j,l}|1\<i,j,l\<2\}$ are equivalent.
However, it is easy to check that $\{a_{i,j,l}|1\<i,j,l\<2\}$ and $\{a'_{i,j,l}|1\<i,j,l\<2\}$ are not equivalent.
Therefore,  $\mathcal{C}_{(0)}$ and $\mathcal{C}'_{(0)}$ are not tensor equivalent.
\end{proof}

Now assume that $m>1$. Let $G=(\L_1\ot_{\mathbb F}I_n)(I_m\ot_{\mathbb F}\L_{32})\in M_{mn^2\times m^2}(\mathbb F)$.
Then (i) and (iii) are equivalent to
$$(I_m\ot_{\mathbb F}\L_2)
\left(\begin{array}{c}
I_m\ot_{\mathbb F}\L_{22}\\
G\\
\end{array}\right)
=\left(\begin{array}{c}
I_{[m]}\\
0\\
\end{array}\right).$$
Let $\L_2=(\a_{ij})$ with $\a_{ij}\in\mathbb F$ and
$G=\left(\begin{array}{cccc}
G_{1,1}&G_{1,2}&\cdots&G_{1,m}\\
G_{2,1}&G_{2,2}&\cdots&G_{2,m}\\
\cdots&\cdots&\cdots&\cdots\\
G_{n^2,1}&G_{n^2,2}&\cdots&G_{n^2,m}\\
\end{array}\right)$ with $G_{i,j}\in M_m(\mathbb F)$.
Then $\L_{22}=\left(\begin{array}{cccc}
\a_{11}&\a_{12}&\cdots&\a_{1m}\\
\a_{21}&\a_{22}&\cdots&\a_{2m}\\
\cdots&\cdots&\cdots&\cdots\\
\a_{m1}&\a_{m2}&\cdots&\a_{mm}\\
\end{array}\right)$.
Thus, for any $1\<s\<m+n^2$ and $1\<t\<m$, it follows from the matrix equation above that
$$\sum\limits_{j=1}^m\a_{sj}\a_{jt}I_m+\sum\limits_{j=m+1}^{m+n^2}\a_{sj}G_{j-m,t}
=\left\{\begin{array}{ll}
I_{t,s},& \mbox{ \rm if }1\<s\<m,\\
0,&\mbox{ \rm if }m+1\<s\<m+n^2.\\
\end{array}\right.$$
Hence for any $1\<p,q\<m$, by comparing the $(p,q)$-entries of the matrices on the both sides of the above equation, one gets
\begin{equation}\label{Eq1}
\sum\limits_{j=1}^m\a_{sj}\a_{jt}\d_{pq}+\sum\limits_{j=m+1}^{m+n^2}\a_{sj}G_{j-m,t}(p,q)
=\left\{\begin{array}{ll}
\d_{tp}\d_{sq},& \mbox{ \rm if }1\<s\<m,\\
0,&\mbox{ \rm if }m+1\<s\<m+n^2,\\
\end{array}\right.
\end{equation}
where $\d_{pq}$'s are Kronecker symbols, $G_{j-m,t}(p,q)$ is the $(p,q)$-entry of the matrix $G_{j-m,t}$.
Let $1\<k\<m$.
Putting $t=p=q=k$ in the equation (\ref{Eq1}), one finds that
$$(\a_{1k}, \a_{2k},\cdots,\a_{mk},G_{1,k}(k,k),G_{2,k}(k,k),\cdots,G_{n^2,k}(k,k))^T$$
is a solution of the system of linear equations $\L_2X=\e_{m+n^2,k}^T$.
Now assume that $m>1$. Then one can choose an integer $l$ with $1\<l\<m$
such that $l\neq k$.  Putting $t=p=l$ and $q=k$ in the equation (\ref{Eq1}), one finds that
$$(0,0,\cdots,0,G_{1,l}(l,k),G_{2,l}(l,k),\cdots,G_{n^2,l}(l,k))^T$$
is also a solution of the system of linear equations $\L_2X=\e_{m+n^2,k}^T$.
However, the solution of the system of linear equations $\L_2X=\e_{m+n^2,k}^T$
is unique since its coefficient matrix $\L_2$ is invertible. This implies that
$\a_{1k}=\a_{2k}=\cdots=\a_{mk}=0$ and $\L_{22}=0$.
Again by the invertibility of $\L_2$, the ranks of the matrices $\L_{23}$
and  $\L_{32}$ are both equal to $m$.
Then by the equation (vi), one gets $(\L_{23}\ot_{\mathbb F}I_n)(I_n\ot_{\mathbb F}\L_{32})=0$,
which implies $2mn\<n^3$, and hence $2m\<n^2$.

\begin{proposition}\label{2.15}
Let $\widehat{\mathcal C}$ be a semisimple tensor category of rank two over $\mathbb F$,
and $V$ a simple object not isomorphic to the unit object $\mathbf 1$.
Suppose that $V\ot V\cong m\mathbf{1}\oplus nV$ with $m>0$ and $n>0$.
If $m>1$, then $2m\<n^2$. Furthermore, if $m=2$, then $n>2$.
\end{proposition}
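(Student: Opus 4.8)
By \cite[Corollary 3.15 and Theorem 3.25]{ChenZhang}, the existence of a semisimple tensor category $\widehat{\mathcal C}$ as in the statement is equivalent to the solvability of the system (i)--(xiii) for the given $m$ and $n$ (with $\L_1,\L_2$ invertible), so both parts amount to showing that this system is unsolvable --- the first part whenever $m>1$ and $2m>n^2$, the second whenever $(m,n)=(2,2)$.

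The first part is exactly the conclusion of the discussion immediately preceding the proposition: invertibility of $\L_2$ forces $\L_{22}=0$ and ${\rm rank}\,\L_{23}={\rm rank}\,\L_{32}=m$, equation (vi) collapses to $(\L_{23}\ot_{\mathbb F}I_n)(I_n\ot_{\mathbb F}\L_{32})=0$, and then $\dim{\rm Im}(I_n\ot_{\mathbb F}\L_{32})\leqslant\dim\ker(\L_{23}\ot_{\mathbb F}I_n)$ reads $mn\leqslant n^3-mn$, i.e.\ $2m\leqslant n^2$. I would simply refer back to this and spend the rest on $(m,n)=(2,2)$.

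So assume $m=n=2$. By the above, $\L_{22}=0$ and $\L_{23}\in M_{2\times 4}(\mathbb F)$, $\L_{32}\in M_{4\times 2}(\mathbb F)$ both have rank $2$; equation (vi) reads $(\L_{23}\ot_{\mathbb F}I_2)(I_2\ot_{\mathbb F}\L_{32})=0$, and, using $\L_{22}=0$ and invertibility of $\L_1$, equation (viii) reads $(I_2\ot_{\mathbb F}\L_{23})(\L_{32}\ot_{\mathbb F}I_2)=0$ --- the same pair $\L_{23},\L_{32}$, each tensored with $I_2$ and composed, but with the left/right positions in the two tensor products swapped. The heart of the argument is that (vi) forces both $\L_{23}$ and $\L_{32}$ to be ``decomposable''. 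Viewing $\L_{32}$ as a map $\mathbb F^2\to\mathbb F^2\ot\mathbb F^2$ and writing $\L_{32}(x)=\sum_i a_i\ot b_i(x)$ with $\{a_i\}$ a basis of the first factor, one unravels (vi) into: $\L_{23}(v\ot z)=0$ for all $z\in\mathbb F^2$ and all $v$ in $W:=\sum_i{\rm Im}\,b_i$. Now $W\neq 0$ (otherwise $\L_{32}=0$), and since $W\ot\mathbb F^2\subseteq\ker\L_{23}$ has dimension $2\dim W$ while $\dim\ker\L_{23}=2$, necessarily $\dim W=1$, say $W=\langle w_0\rangle$. Then every $b_i$ has image in $\langle w_0\rangle$, so $\L_{32}(x)=S(x)\ot w_0$ for a linear map $S\colon\mathbb F^2\to\mathbb F^2$, which must be an isomorphism as ${\rm rank}\,\L_{32}=2$; equivalently $\L_{32}=w_0\ot_{\mathbb F}S$. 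Likewise $\ker\L_{23}=w_0\ot\mathbb F^2$, which forces $\L_{23}=D\ot_{\mathbb F}u^{T}$ with $D$ an isomorphism and $0\neq u\in\mathbb F^2$ satisfying $u^{T}w_0=0$. (Note $u^{T}w_0=0$ is precisely what makes the left side of (vi) vanish, so (vi) is equivalent to --- not merely implied by --- these decompositions.) Substituting into (viii), using associativity of $\ot_{\mathbb F}$ and the mixed-product rule of Lemma~\ref{composition}, the left side of (viii) becomes $(I_2\ot_{\mathbb F}D\ot_{\mathbb F}u^{T})(w_0\ot_{\mathbb F}S\ot_{\mathbb F}I_2)=w_0\ot_{\mathbb F}(DS)\ot_{\mathbb F}u^{T}$, which is nonzero since $DS$ is invertible and $u,w_0\neq 0$; this contradicts (viii). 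Hence $(m,n)=(2,2)$ is impossible, and together with $n\geqslant 2$ from the first part we conclude $n>2$ whenever $m=2$.

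The one genuinely delicate step is the passage to $\dim W=1$: equation (vi) on its own is merely \emph{consistent} with the decomposable forms, and it is the exact value $\dim\ker\L_{23}=2$ --- the rank computation recalled in the second paragraph --- that excludes $\dim W=2$. The remaining work, unravelling the meaning of $\ot_{\mathbb F}$ and keeping track of which tensor slot each factor acts on, is routine, but it is precisely this that makes equations (vi) and (viii) inequivalent and hence produces the contradiction.
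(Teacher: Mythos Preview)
Your argument is correct and takes a genuinely different route from the paper's.

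The paper draws on equations (v) and (vi): with $\L_{22}=0$ and $m=n=2$, each of (v) and (vi) places four explicit column vectors (built from the entries $\b_{ji}$ of $\L_{32}$) into $\ker\L_{23}$; a short case check then shows that for each nonzero column of $\L_{32}$ these eight vectors have rank at least $3$, contradicting $\dim\ker\L_{23}=2$.  Your approach instead extracts the full structural consequence of (vi) alone --- that $\L_{32}$ and $\L_{23}$ must be simple tensors $w_0\ot_{\mathbb F}S$ and $D\ot_{\mathbb F}u^T$ with $u^Tw_0=0$ --- and then plugs into (viii), where the mixed-product rule immediately gives the nonzero expression $w_0\ot_{\mathbb F}(DS)\ot_{\mathbb F}u^T$.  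This is more conceptual: no case analysis, and the contradiction falls out of a single line.  The paper's route, by contrast, is more elementary and does not require tracking which tensor slot each factor occupies.

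One notational wobble worth tidying: your intermediate statements ``$\L_{32}(x)=S(x)\ot w_0$'' and ``$\ker\L_{23}=w_0\ot\mathbb F^2$'' have the tensor factors reversed relative to the paper's Kronecker convention (where the left factor is the fast index).  The correct reading, consistent with your own final formulas $\L_{32}=w_0\ot_{\mathbb F}S$ and $\L_{23}=D\ot_{\mathbb F}u^T$, is $\ker\L_{23}=\mathbb F^2\ot_{\mathbb F}\langle w_0\rangle$.  This does not affect the validity of the concluding computation, which is carried out entirely in terms of the matrix forms and Lemma~\ref{composition}, but it would be cleaner to align the prose with the convention used there.
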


\begin{proof}
The first statement follows from the above discussion. Now let $m=2$. Then $n\>2$.
Suppose $n=2$. Then $\L_{23}\in M_{2\times 4}(\mathbb F)$ and $\L_{32}\in M_{4\times 2}(\mathbb F)$.
By the discussion above, the ranks of $\L_{23}$ and $\L_{32}$ are both equal to 2.
Let $\L_{32}=\left(\begin{array}{cc}
\b_{11}&\b_{12}\\
\b_{21}&\b_{22}\\
\b_{31}&\b_{32}\\
\b_{41}&\b_{42}\\
\end{array}\right)$.
Then by the equations (v) and (vi), the following vectors are solutions of the system of
linear equations $\L_{23}X=0$:
 $$\left(\begin{array}{c}
\b_{1i}\\
\b_{2i}\\
0\\
0\\
\end{array}\right),
\left(\begin{array}{c}
0\\
0\\
\b_{1i}\\
\b_{2i}\\
\end{array}\right),
\left(\begin{array}{c}
\b_{1i}\\
0\\
\b_{2i}\\
0\\
\end{array}\right),
\left(\begin{array}{c}
0\\
\b_{1i}\\
0\\
\b_{2i}\\
\end{array}\right),
\left(\begin{array}{c}
\b_{3i}\\
\b_{4i}\\
0\\
0\\
\end{array}\right),
\left(\begin{array}{c}
0\\
0\\
\b_{3i}\\
\b_{4i}\\
\end{array}\right),
\left(\begin{array}{c}
\b_{3i}\\
0\\
\b_{4i}\\
0\\
\end{array}\right),
\left(\begin{array}{c}
0\\
\b_{3i}\\
0\\
\b_{4i}\\
\end{array}\right),$$
where $1\<i\<2$. Since $(\b_{1i},\b_{2i},\b_{3i},\b_{4i})\neq(0,0,0,0)$, the rank of the above vector group
is larger than or equal to 3, which contradicts to the fact that the rank of the matrix $\L_{23}$ is 2.
Therefore, $n>2$.
\end{proof}

Ostrik proved that there are only 4 fusion categories of rank 2 \cite{Ostrik03-2}. By above discussion, we have following remark.

\begin{remark}
The fusion categories of rank two are $\mathcal{C}_+$, $\mathcal{C}_-$, $\mathcal{C}_{(0)}$ and $\mathcal{C}'_{(0)}$.
\end{remark}
 The following table display our main results.
\begin{table}[ht!]

\center
\begin{tabular}{|c|c|c|} \hline
 \multirow{2}*{$m=1,n=0$} & \multirow{2}*{$\mathcal{C}_+ $, $\mathcal{C}_-$}& $\mathcal{C}_+ \ncong \mathcal{C}_-$ , if char$\Bbbk\neq 2$\\ \cline{3-3}
 \multirow{2}*{} &  \multirow{2}*{}& $\mathcal{C}_+ \cong \mathcal{C}_-$, if char$\Bbbk=2$\\ \hline
  $m>1,n=0$&  \multicolumn{2}{c|}{No}  \\ \hline
  $m=0,n=1$ &  \multicolumn{2}{c|}{$\mathcal{C}_1$}  \\ \hline
    $m=0,n>1$ &  \multicolumn{2}{c|}{$\mathcal{C}_n\ncong\mathcal{C}'_n$}  \\ \hline
  $m=1,n=1$ & {$\mathcal{C}_{(5)}$, if char$\Bbbk$=5}& {$\mathcal{C}_{(0)}\ncong \mathcal{C}'_{(0)}$, if char$\Bbbk \neq 5$}\\ \hline
        $m>1$, $2m>n^2$ &  \multicolumn{2}{c|}{No}  \\ \hline
          $m=2$,$n=2$ &  \multicolumn{2}{c|}{No}  \\ \hline
  \end{tabular}
   \end{table}

\centerline{ACKNOWLEDGMENTS}

This work is supported by NNSF of China (Nos. 12071412,12201545).

\end{document}